\documentclass[12pt]{amsart}

\usepackage{amsmath,amsfonts,bm}

\def\eqref#1{equation~\ref{#1}}

\def\1{\bm{1}}

\DeclareMathAlphabet{\mathsfit}{\encodingdefault}{\sfdefault}{m}{sl}
\SetMathAlphabet{\mathsfit}{bold}{\encodingdefault}{\sfdefault}{bx}{n}

\newcommand{\R}{\mathbb{R}}

\DeclareMathOperator*{\argmin}{arg\,min}

\usepackage{amsmath}
\usepackage{mathrsfs}
\usepackage{amssymb}
\usepackage{mathtools}
\usepackage{soul}

\usepackage{algorithm}
\usepackage[noend]{algpseudocode}

\usepackage{bm}
\usepackage{enumitem}
\usepackage{graphicx}
\usepackage{subcaption}
\usepackage{hyperref}

\usepackage{parskip}
\usepackage[dvipsnames]{xcolor}

\usepackage{multirow}
\usepackage{booktabs}
\usepackage{tabularray}
\allowdisplaybreaks

\allowdisplaybreaks
\def\blue{\textcolor{blue}}

\newcommand{\Laura}[1]{\textbf{\textcolor{DarkOrchid}{#1}}}

\newtheorem{theorem}{Theorem}
\newtheorem{lemma}[theorem]{Lemma}

\theoremstyle{definition}

\theoremstyle{remark}
\newtheorem{remark}[theorem]{Remark}
\numberwithin{equation}{section}

\numberwithin{equation}{section}

\begin{document}
\title[Geodesic Distance Approximation] 
{A Primal-Dual Level Set Method for Computing Geodesic Distances}

\author{Hailiang Liu}
\address{$^\dagger$Department of Mathematics, Iowa State University, Ames, IA 50011}
\email{hliu@iastate.edu}

\author{Laura Zinnel}
\address{$^\ddagger$Department of Mathematics, Iowa State University, Ames, IA 50011}
\email{lezinnel@iastate.edu}

\subjclass{90C46, 93B40}
\keywords{Geodesic, Primal-dual, Level set, Convergence}
\thanks{ This work by Zinnel is partially supported by the National Science Foundation Grant DGE-2152117.}


\begin{abstract}
    The numerical computation of shortest paths or geodesics on surfaces, along with the associated geodesic distance, has a wide range of applications. Compared to Euclidean distance computation, these tasks are more complex due to the influence of surface geometry on the behavior of shortest paths.  {This paper introduces a primal-dual level set method for computing geodesic distances. A key insight is that the underlying surface can be implicitly represented as a zero level set, allowing us to formulate a constraint minimization problem. We employ the primal-dual methodology, along with   regularization and acceleration techniques, to develop our algorithm. This approach is robust, efficient, and easy to implement. We establish a convergence result for the high-resolution PDE system, and numerical evidence suggests that the method converges to a geodesic in the limit of refinement. }
\end{abstract}

\maketitle

\section{Introduction}
{A geodesic is a curve on a surface that minimizes local length, serving as a useful generalization of straight lines in Euclidean space.
Geodesic distances play a crucial role in both theoretical and applied problems, including those in computer vision \cite{Crane:2020}, conformal field theory \cite{Kastikainen:2022},  quantum field theory \cite{Chapman:2023}, and geophysical visualization \cite{Davis:2019}. Therefore, approximating the minimal geodesic path between two points on a surface is essential in many application domains.}

The approximation tasks can be framed as constrained optimization problems; however, they are often highly non-convex, and effective algorithms for computing geodesic distances are limited. For complex surfaces represented in 3D meshes, specialized algorithms such as the fast-marching method \cite{Kimmel:1998}, the Dijkstra algorithm (\cite{Davis:2019, Mitchell:1987, Memoli:2001}), or the heat flow method \cite{Crane:2013} are often employed to find the shortest path across the surface. Although effective, these methods often involve considerable complexity due to the construction of a mesh or graph representation. In contrast, our algorithm approximates geodesics on surfaces without requiring any surface discretization.

{ In this work, we adopt a different strategy: 
using the level set of a function to represent the surface, rather than employing  a triangulated mesh. In the literature, methods based on implicit level set surfaces are referred to as level set methods, which are particularly effective in image science applications, including image segmentation and rendering \cite{Osher:2003a}. 
In this work,  we introduce a primal-dual method for approximating geodesic distances and minimal geodesic paths on surfaces represented by level set functions. }

\subsection{Background}
In differential geometry \cite{Araujo:2024a}, the \textit{tangent space} to a surface $\Omega$ at a point $p$, is defined by 
\[T_p\Omega = \{\dot{\xi}(0)~|~\xi: (-\varepsilon, \varepsilon)\to \Omega \text{ is } C^{\infty} \text{ and } \xi(0) = p\}.
\]
 A surface is considered \textit{orientable} if there exists a continuous function $N$ on $\Omega$ such that for each $p$, the vector $N(p)$ is orthogonal to $T_p\Omega$. This field of normal vectors $N$ is referred to as an \textit{orientation} of $\Omega$. Notably, if $\Omega$ is the zero level set of function $\phi: \R^3\to \R$,  and $\gamma:[0,1]\to \Omega$ is a path on $\Omega$, then  
 $$
 \phi(\gamma(t))=0 \Rightarrow 
 \nabla \phi(\gamma(t))\cdot \dot \gamma(t)=0 \quad \text{for} \quad t\in [0, 1].   
 $$
 Thus, $\dot \gamma(t) \in T_p(\Omega)$. If the surface is smooth and the vector field is defined as 
\[N(p):=\frac{1}{|\nabla\phi(p)|}\nabla\phi(p)
\]
for every point $p=\gamma(t)$ along the curve, then $N$ provides an orientation of $\Omega$.
 
A curve $\gamma:[0,1]\to \Omega$ is a geodesic if and only if $\ddot{\gamma}$ is orthogonal to the tangent plane of $\Omega$ at $\gamma(t)$ for all $t\in[0,1]$ \cite{Pressley:2001}. 
Further, for a zero level set surface $\Omega$ with orientation $N$ as defined above, if $\ddot{\gamma}$ is parallel to $N(p)$ for every point $p  = \gamma(t)$ along the curve, 
then $\gamma$ is a geodesic.  

It is known that if $\gamma$ is a shortest path, then $\gamma$ is a geodesic; however, not every geodesic represents a shortest path \cite{Pressley:2001, Crane:2020}. {For example, if $p$ and $q$ are points on a sphere, then there is both a long and a short great circle arc joining these points. Both of these great circle arcs are geodesics, but the short great circle arc is the shortest geodesic \cite{Pressley:2001}.} The fact that not all geodesics are shortest paths, along with the fact that a shortest geodesic between two points is not necessarily unique, complicates the task of finding a minimal geodesic on a surface. 

\subsection{Problem Setup}\label{sec:prob-setup} 
Our goal is to develop an efficient algorithm for iteratively computing a geodesic and the geodesic distance (shortest path) between two points on a surface.  
Let $\Omega$ be a surface represented as the zero level set $\{x\in\R^3~|~ \phi(x)=0\}$ of a function $\phi:\R^3 \to \R$.  
For $p,q\in \Omega$ and a path $\gamma:[0,1]\to \R^3$ with $\gamma(0) = p$ and $\gamma(1) = q$, the \textbf{length} of $\gamma$ is calculated by 
\[
\int_0^1 |\dot \gamma(t)|~dt.
\]
We are particularly interested in paths from $p$ to $q$ on $\Omega$. That is, we seek path $\gamma$ to be in the set  $\Gamma(p,q,\Omega)$ defined by
\[
\Gamma(p,q,\Omega) = \left\{\gamma~ :~ \gamma(0) = p, \gamma(1) = q, \text{ and } \phi(\gamma(t)) = 0 ~\forall t\in [0,1] \right\}.
\]
Recall that a path $\gamma:[0,1]\to \R^3$ is a \textit{geodesic} if $\gamma \in \Gamma(p,q,\Omega)$ and $\ddot{\gamma}$ is orthogonal to the tangent plane of $\Omega$ at $\gamma(t)$ for all $t\in[0,1]$. Hence,
\begin{equation}\label{geodesic}
    \ddot{\gamma}\cdot \dot{\gamma} = 0.
\end{equation}
It has been shown that the shortest length path between any two points on a surface must be a geodesic \cite{Pressley:2001}.
The \textbf{geodesic distance} from $p$ to $q$ is the length of the \textit{shortest length path $\gamma$} from $p$ to $q$ on the surface $\Omega$. That is, the length of path $\gamma$ such that 
\begin{equation}\label{eq:min_geodesic}
\gamma = \argmin_{\xi \in \Gamma(p,q,\Omega)}\int_0^1|\dot \xi (t)|~dt. 
\end{equation}
Using this formulation, we propose determining the geodesic distance by solving the following optimization problem:
\begin{align*}
\min_{\gamma \in \Gamma(p, q, \Omega)}  \frac{1}{2}\int_0^1|\dot \gamma(t)|^2~dt, 
\end{align*}
which can be shown to be equivalent to (\ref{eq:min_geodesic}). 
To eliminate the level set constraint, a Lagrangian multiplier $\lambda(t) \in C^1[0, 1]$ is introduced in an inf-sup problem:
\[\inf_{\gamma}\sup_{\lambda} \left\{  \frac{1}{2}\int_0^1 |\dot{\gamma}(t)|^2~dt  + \int_0^1 \lambda(t)\phi(\gamma(t))~dt\right\}.
\]
A simple strategy is to alternate a gradient-like descent in $\gamma$ and a gradient-like ascent in $\lambda$. This yields the following iterative scheme:   
\[
\begin{cases}
 \lambda_{k+1}(t)  = \lambda_k(t)+\tau_\gamma \phi(\gamma_{k}(t)), \\
    \gamma_{k+1}(t)  = \gamma_k(t)-\tau_\lambda \left(-\ddot{\gamma}_k(t)+\lambda_{k+1}(t)\nabla\phi(\gamma_k(t))\right), 
\end{cases}
\]
where $\tau_\gamma, \tau_\lambda$ are tunable step sizes. However, such basic updates may not perform well in all cases due to stability issues inherent in the formulation.  To address this, we augment the functional with a regularization term of the form:
\[
\mathcal{L}_{\varepsilon}[\gamma,\lambda] = \frac{1}{2}\int_0^1 |\dot{\gamma}|^2~dt + \int_0^1\lambda(t)\phi(\gamma(t))dt -\frac{\varepsilon}{2}\int_0^1\lambda^2(t)~dt
\]
and enhance the gradient descent-ascent updates with a relaxation step, resulting in the following update scheme:  
\begin{equation*}\begin{cases}
    \lambda_{k+1}(t) = \frac{1}{1+\varepsilon\tau_{\lambda}}(\lambda_k(t)+\tau_{\lambda}\phi(\gamma_{k}(t))), \\
    \Tilde{\lambda}_{k+1}(t) = \lambda_{k+1}(t)+ \omega(\lambda_{k+1}(t)-\lambda_k(t)),\\
    \gamma_{k+1}(t) = \gamma_k(t) - \tau_{\gamma}(-\ddot{\gamma}_k(t)+\Tilde{\lambda}_{k+1}(t)\nabla\phi(\gamma_k(t))),
\end{cases}\end{equation*}
where $\varepsilon>0$ is a regularization parameter,  $\omega \in [0, 1]$ controls the degree of relaxation, and $\tau_\lambda, \tau_\gamma$ are tunable step sizes. This refinement has been shown to significantly improve  stability and numerical performance. Notably, the use of relaxation in this manner has also been explored in the Primal-Dual Hybrid Gradient (PDHG) method \cite{Chambolle:2011,Go2015} 
which addresses saddle-point problems of the primal-dual form: 
\[\min_{y\in Y}\max_{x\in X}  g(x) + \left(\langle Ax, y\rangle - f^*(y)\right).
\]
PDHG has proven to be an efficient algorithm for a wide range of problems, particularly in imaging applications,  and has gained popularity due to its strong convergence guarantees.
For further details, readers are referred to \cite{Chambolle:2011}, specifically Section \ref{sec:acceleration} for a brief description.

Our problem setup, which employs a novel level set representation, has not been previously explored. This approach establishes a primal-dual connection between the curve $\gamma$ and the multiplier $\lambda$,  resulting in an algorithm with potential applications in various fields.

\subsection{Contribution}
Our contribution is threefold.
\begin{itemize} 
\item In this paper, we introduce a simple and efficient algorithm for approximating the minimal geodesic path between two points on a level set surface.
\item We show that, under certain structural assumptions on the level set function, the continuous PDE system exhibits asymptotic convergence, thereby supporting the convergence of the discrete algorithm. Furthermore, we demonstrate that, upon convergence, the algorithm approximates a minimal geodesic curve. 
\item We also explored two variations of the base algorithm, both of which demonstrated promising performance in various examples. To illustrate, we present results for the base algorithm and its variations on examples such as the surfaces of a sphere, a torus, and the Stanford Bunny.
\end{itemize} 

\subsection{Related work} ${}$

{\bf Geodesic solvers.} 
Several popular methods exist for computing geodesic distances. The fast-marching method has been used to approximate the distances between points on a triangulated mesh representation of a surface \cite{Kimmel:1998}. The Dijkstra algorithm, an algorithm developed for finding the shortest paths in graphs, has been used to approximate the shortest paths on polyhedral surfaces \cite{Monneau:2010}. 
In addition, heat flow methods have been developed for the geodesic approximation using the heat kernel function \cite{Crane:2013}.
These methods typically require the surface to be represented as a triangulated mesh or as a graph. In contrast, our method can be applied directly to level set surfaces without requiring prior discretizaiton. For graph-based problems, we refer the reader to \cite{Chu:2017, Bungert:2023} for discussions on the use of Lipschitz embedding and Lipschitz learning to the study of  shortest paths on graphs. 

{\bf Level set methods.} Level set surface representations have proven to be valuable tools for various image processing tasks \cite{Osher:2003a}. Specifically, we are interested in using level set methods for brain imaging applications, such as sulci segmentation \cite{Torkaman:2017}, cortical surface reconstruction \cite{Cruz:2020a}, and cortical surface registration and labeling \cite{Joshi:2012}.

The application of level set methods involves initializing a level set function to represent a given surface. While this representation is not unique, a common choice is the signed distance function (SDF), where the value at each point is the distance to the nearest point on the surface, positive inside and negative outside.  
The SDF can be explicitly defined using a crossing-time approach or fast marching method, as described by \cite{Osher:2003a}. 
If the surface is given as a set of scattered points, a level set function can be reconstructed by iteratively refining the level set function to minimize an energy functional based on the data's proximity and smoothness \cite{Zhao:2001}. Alternatively, deep learning methods have also been implemented for generating level set surface representations directly from input data, such as point clouds or image features \cite{Park:2019}. Refer to Section \ref{sec:examples} for an explanation of how the SDF satisfies the structural assumption outlined in Section 3.3 for the example shown in Figure \ref{fig:bunny}. 

Level-set methods have been applied to model the motion of curves on surfaces. In one approach introduced by \cite{Cheng:2002}, a level set function represents the surface, while an additional  level set function is
used to define the curve.  The curve is then represented by the intersection of the two zero level sets of these two functions. 
Our approach represents the curve $\gamma: [0,1]\to \R^3$ directly as a parametrized curve, rather than defining $\gamma$ through the intersection of two zero level sets.

{\bf Primal-dual algorithms.} Primal-dual formulations for constrained optimization have been extensively studied,  with applications ranging from imaging to broader convex optimization problems (see \cite{Rockafellar:1970, Goldstein:2009, Esser:2010, Chambolle:2011, Boyd:2010}). These methods for managing constraints have played a significant role in the development of primal-dual algorithms. Building on this foundation, our work incorporates a level-set constraint into the framework.

{\bf Acceleration.} While gradient descent/ascent is a reliable first-order optimization method, it often suffers from slow convergence in many practical problems. This limitation  has motivated  the development of accelerated variants that incorporate momentum, such as  Polyak's heavy ball method (1964) and Nesterov’s accelerated gradient descent (1983). In the context of calculus of variations problems defined over functions, a PDE-based acceleration framework was introduced by Benyamin, Calder, Sundaramoorthi and Yezzi \cite{BCS2020}, further analyzed in \cite{CY2019}. These works show improved efficiency via wave-equation dynamics and explicit time-stepping schemes. In this work, we adopt the relaxation method, common used in the Primal-Dual Hybrid Gradient (PDHG) algorithm \cite{Chambolle:2011,Go2015}.

\subsection{Organization of the Paper} 
In Section 2, we define the geodesic approximation problem. Section 3 introduces an algorithm to solve it, followed by an analysis of the algorithm's behavior in Section 4. Finally, concluding remarks and discussions are provided at the end.

\section{Method}\label{sec:method}

The uniqueness of geodesics in three-dimensional spaces depends on the specific geometry and topology of the space -- particularly aspects such as curvature, singularities, and the presence of antipodal points or a cut locus. For most pairs of points on a given surface, the geodesic connecting  them is unique. Therefore, we will focus on the case where a unique geodesic exists and defer discussion of non-unique geodesics to a later section. 

\subsection{Optimization Problem}
From now on, we define 
$$
L(p, q)=\int_0^1 |\dot \gamma(t)|dt
$$
as the \textbf{geodesic length} from $p$ to $q$, provided $\gamma(t)$ is a geodesic curve which is a solution to the minimization problem (\ref{eq:min_geodesic}).

We want to find the geodesic distance $L(p,q)$ from $p$ to $q$ on surface $\Omega$, represented by $\Omega = \{x~|~ \phi(x) = 0\}$ for some function $\phi: \R^3 \to \R$. Define
\[d^2(p,q) = \min_{\gamma \in \Gamma(p, q, \Omega)}\int_0^1 |\dot{\gamma}(t)|^2~dt.
\]
By Cauchy-Schwarz, 
\[
|p-q| \leq \int_0^1 |\dot{\gamma}(t)|~dt \leq \left(\int_0^1 1~dt\right)^{1/2}\left(\int_0^1 |\dot{\gamma}(t)|^2~dt\right)^{1/2} = \left( \int_0^1 |\dot{\gamma}(t)|^2~dt\right)^{1/2},\] 
hence we have 
$$
|p-q|\leq L(p, q)\leq d(p, q). 
$$
Note that for a geodesic, $L(p, q)=d(p, q)$.  More precisely, we have the following result. 
\begin{lemma}
  If $\gamma$ is a geodesic on $\Omega$ from $p$ to $q$, then $L(p,q) = d(p,q)$.
 Moreover, the following statements are equivalent:\\
(i) $\gamma$ is a constant-speed geodesic. \\ 
(ii) $\gamma \in C^1[0, 1]$ and $|\dot \gamma(t)| = d(\gamma(0), \gamma(1))$ for all $t \in[0, 1]$. \\
(iii) Let $m>1$, $\gamma$ solves a class of minimization problems 
$$
\min_{\gamma \in \Gamma(p, q, \Omega)} \left\{ 
\int_0^1 |\dot \gamma(t)|^m dt\right\}.
$$
\end{lemma}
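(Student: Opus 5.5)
Throughout, I read ``geodesic'' in this lemma as the minimal (shortest-length) geodesic from $p$ to $q$, the object fixed at the start of this section, parametrized by $[0,1]$. For a general critical geodesic the claim $L=d$ fails: the long great-circle arc on a sphere is a geodesic, but its length exceeds $d$.

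The first claim follows from the Cauchy--Schwarz chain above. By the characterization $\ddot\gamma\perp T_{\gamma(t)}\Omega$, and since $\dot\gamma(t)\in T_{\gamma(t)}\Omega$, we get $\ddot\gamma\cdot\dot\gamma=0$. Hence $\frac{d}{dt}|\dot\gamma|^2=2\,\ddot\gamma\cdot\dot\gamma=0$, so $|\dot\gamma(t)|\equiv c$ is constant. Then $L(p,q)=\int_0^1|\dot\gamma|\,dt=c$ and $\int_0^1|\dot\gamma|^2dt=c^2$. For the lower bound, take any $\xi\in\Gamma(p,q,\Omega)$. Cauchy--Schwarz gives $\int_0^1|\dot\xi|^2\ge(\int_0^1|\dot\xi|)^2\ge L(p,q)^2=c^2$, using that $\gamma$ has minimal length. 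The value $c^2$ is attained by $\gamma$, so $d^2(p,q)=c^2$ and $L(p,q)=d(p,q)$.

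For the equivalences, (i)$\Rightarrow$(ii) is what was just shown: $c=L=d(\gamma(0),\gamma(1))$. For (ii)$\Rightarrow$(iii), fix $m>1$ and any $\xi\in\Gamma(p,q,\Omega)$. By Jensen or H\"older,
\[
\int_0^1|\dot\xi|^m\,dt\ge\Big(\int_0^1|\dot\xi|\,dt\Big)^m\ge L(p,q)^m=d^m=\int_0^1|\dot\gamma|^m\,dt.
\]
Here $L\le d$ is known generally. For the reverse inequality, the length of $\gamma$ is $d$, and any shorter curve could be reparametrized to constant speed, making $\int|\dot{}|^2$ smaller than $d^2$, a contradiction. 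So $L=d$, and $\gamma$ is a minimizer.

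For (iii)$\Rightarrow$(i), let $\gamma$ minimize the $m$-energy. Equality must hold in every step of the chain above, since the constant-speed reparametrization of a minimal-length curve achieves $L^m$. Because $t\mapsto t^m$ is strictly convex for $m>1$, equality in Jensen forces $|\dot\gamma|$ to be constant a.e. The equality $\int_0^1|\dot\gamma|=L$ then says $\gamma$ is a shortest path, hence a geodesic by the result cited from \cite{Pressley:2001}. It has constant speed, so (i) holds.

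The main obstacle is regularity and existence: making sure a length minimizer exists, and that the reparametrization to constant speed stays in $\Gamma$ with $C^1$ regularity. I would assume $\Omega$ is a smooth compact connected regular level set, so that geodesics are smooth and minimizers exist. I would then perform the arclength reparametrization explicitly, using the rescaled arclength $s(t)/L$, after first restricting to curves with $\dot\xi\neq0$ or handling the degenerate case by an approximation argument.
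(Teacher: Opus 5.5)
Your argument is correct. It shares the paper's skeleton: constant speed from $\ddot\gamma\cdot\dot\gamma=0$, the H\"older/Jensen inequality $\int_0^1|\dot\gamma|^m\,dt\ge(\int_0^1|\dot\gamma|\,dt)^m$, and its equality case for (iii)$\Rightarrow$(i). The real difference is in how you establish minimality.

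The paper proceeds as follows at the three points where minimality is needed.
\begin{itemize}
\item In the first claim it writes $(\int_0^1|\dot\gamma|^2\,dt)^{1/2}=d(p,q)$ without justification, once it has assumed $\gamma$ minimizes length.
\item In (ii)$\Rightarrow$(iii) it only records that H\"older is tight for $\gamma$ itself. It never compares $\gamma$ with other curves in $\Gamma(p,q,\Omega)$.
\item In (iii)$\Rightarrow$(i) it asserts that a minimizer must make H\"older tight, and it stops at constant speed.
\end{itemize}

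You instead run the chain $\int_0^1|\dot\xi|^m\,dt\ge(\int_0^1|\dot\xi|\,dt)^m\ge L(p,q)^m$ over arbitrary competitors $\xi$. You then use the constant-speed reparametrization of a shortest curve to show the bound is attained, so that $\min_{\Gamma(p,q,\Omega)}\int_0^1|\dot\xi|^m\,dt=L(p,q)^m=d(p,q)^m$. This one ingredient closes all three gaps:
\begin{itemize}
\item it proves $d=c$ in the first claim;
\item it gives genuine minimality in (ii)$\Rightarrow$(iii);
\item in (iii)$\Rightarrow$(i) it forces equality in both links of the chain. That yields constant speed and also shows $\gamma$ is a shortest path, hence a geodesic.
\end{itemize}
Differentiating $|\dot\gamma|^2$ rather than $|\dot\gamma|$ also avoids the paper's division by $|\dot\gamma|$.

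Your reading of ``geodesic'' as the minimal geodesic is warranted. Otherwise the long great-circle arc, a constant-speed geodesic whose speed exceeds $d$, violates (i)$\Rightarrow$(ii). This reading also matches the paper's own starting point that $\gamma$ minimizes arc length.

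The only cost of your route is the assumptions you already flag: a length minimizer exists, and constant-speed reparametrization stays in $\Gamma(p,q,\Omega)$. The paper tacitly presupposes these when it defines $L(p,q)$.
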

\begin{proof}
(i) $\Leftrightarrow$ (ii): First, recall that by the definition of a geodesic, the path $\gamma$ minimizes arc length, i.e.,
$$
L(p, q)=\int_0^1 |\dot \gamma(t)|dt =d(p, q).
$$
Since $\gamma$ is a geodesic, it satisfies the Euler–Lagrange equations associated with arc length minimization.
In particular, we compute:
    \[\frac{d}{dt}|\dot{\gamma}(t)| = \frac{1}{|\dot{\gamma}(t)|}(\dot{\gamma}(t)\cdot \ddot{\gamma}(t)) = 0.\]
    Therefore, $|\dot{\gamma}|$ is constant in $t$.
    Therefore, $\gamma$ moves at constant speed. Thus,  
\begin{align*}
    L(p,q)  = \int_0^1|\dot{\gamma}(t)|~dt = |\dot{\gamma}(t)|
    = \left(\int_0^1 |\dot{\gamma}(t)|^2~dt\right)^{1/2}
    = d(p,q).
\end{align*}
The equivalent between (i) and (ii) follows directly (see also a proof in \cite{Pressley:2001} on page 216).\\
 (ii) $\Rightarrow$ (iii): Assume $\gamma \in C^1[0, 1]$ and $|\dot \gamma(t)| = d(\gamma(0), \gamma(1))$ for all $t \in[0, 1]$. For $m>1$, choose $m^*$ such that $\frac{1}{m}+\frac{1}{m^*} = 1$, then H\"{o}lder's inequality gives 
\[\int_0^1 |\dot{\gamma}(t)|~dt \leq \left(\int_0^1 1^{m^*}~dt\right)^{\frac{1}{m^*}}\left(\int_0^1 |\dot{\gamma}(t)|^{m}~dt\right)^{\frac{1}{m}} = \left(\int_0^1 |\dot{\gamma}(t)|^{m}~dt\right)^{\frac{1}{m}}.\]
This implies 
$$
\left(\int_0^1 |\dot{\gamma}(t)|~dt\right)^m \leq \int_0^1 |\dot{\gamma}(t)|^m~dt.
$$
Since  $|\dot{\gamma}(t)| = d(p,q)$ is a constant, the inequality becomes an equality 
\[
\int_0^1 |\dot{\gamma}(t)|^m~dt = d(p,q)^m = \left(\int_0^1 |\dot{\gamma}(t)|~dt\right)^m.
\]
Therefore, $\gamma$ achieves the minimum of the functional 
$\int_0^1 |\dot \gamma(t)|^m dt$ over $\Gamma(p, q, \Omega)$, proving (iii).  

(iii) $\Rightarrow$ (i):  Assume $\gamma$ minimizes  
$\int_0^1 |\dot \gamma(t)|^m dt$. This means H\"{o}lder's inequality must be tight as an equality, which only occurs  when $|1|^{m^*} = 1$ and $|\dot{\gamma}^*(t)|^m$ are linearly dependent. This implies $|\dot\gamma(t)|$ must be constant almost everywhere. Therefore, $\gamma$ is a constant-speed geodesic. 
\end{proof}
Considering the smoothness of the underlying functional defined by $d(p, q)$, we propose to find the geodesic distance from $p$ to $q$ by solving the following optimization problem:
\begin{align*}
\min_{\gamma \in \Gamma(p, q, \Omega)}  \frac{1}{2}\int_0^1|\dot \gamma(t)|^2~dt.    
\end{align*}
To eliminate the level-set constraint, we introduce a Lagrangian multiplier $\lambda(t) \in C^1[0, 1]$, so that the constraint optimization problem can be reformulated as  an inf-sup problem:
\[\inf_{\gamma}\sup_{\lambda} \mathcal{L}[\gamma,\lambda],\]
where  $\mathcal{L}$ is defined by 
\[
\mathcal{L}[\gamma,\lambda] = \frac{1}{2}\int_0^1 |\dot{\gamma}(t)|^2~dt  + \int_0^1 \lambda(t)\phi(\gamma(t))~dt.
\]

\subsection{Optimality  conditions}

At the optimal pair $(\gamma,\lambda)$, the variational derivative $\delta \mathcal{L} = 0$. Observe that 
\begin{align*}
    \delta \mathcal{L} & = \int_0^1 \dot{\gamma}(t)\delta\dot{\gamma}(t)~dt + \int_0^1\lambda(t) \nabla\phi(\gamma)\cdot \delta\gamma ~dt + \int_0^1 \phi(\gamma(t))\delta\lambda~dt\\
    & = \dot{\gamma}\delta\gamma\big{\rvert}_0^1 - \int_0^1 \ddot{\gamma}\delta\gamma~dt+\int_0^1\lambda(t) \nabla \phi(\gamma) \cdot \delta\gamma~dt +  \int_0^1 \phi(\gamma(t)) \delta\lambda~dt\\
    & = \dot{\gamma}\delta\gamma\big{\rvert}_0^1+\int_0^1(-\ddot{\gamma}+\lambda(t)\nabla\phi(\gamma))\cdot \delta\gamma~dt +  \int_0^1 \phi(\gamma(t))\delta\lambda~dt.
\end{align*}
Note that $\gamma(0)=p$ and $\gamma(1)=q$ are fixed, which implies that $\delta \gamma=0$ at $t=0$ and $t=1$. Consequently, the optimal solution must satisfy the following conditions: 
\[\begin{cases}
    -\ddot{\gamma}(t)+\lambda(t)\nabla\phi(\gamma(t)) = 0,\\
    \phi(\gamma(t)) = 0,\quad  \forall t\in [0,1],\\
    \gamma(0) = p,\quad 
    \gamma(1) = q.
\end{cases}\]

According to \cite[Definition 9.1.1]{Pressley:2001},  a curve $\gamma$ on a surface $\mathcal{S}$ is called a \textit{geodesic} if its acceleration vector $\ddot{\gamma}(t)$ is either zero or perpendicular to the tangent plane of the surface at the point $\gamma(t)$; that is, $\ddot{\gamma}(t)$ is parallel to the unit normal at that point, for all values of the parameter $t$. We now state the following result. 
\begin{lemma}
   A point ($\lambda$,$\gamma$) satisfies the optimality conditions if and only if $\gamma$ is a geodesic.
\end{lemma}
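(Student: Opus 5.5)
We prove the two directions separately, using the definition of geodesic from \cite[Definition 9.1.1]{Pressley:2001}: $\gamma$ lies on $\Omega$ and $\ddot\gamma(t)$ is either zero or parallel to the unit normal $N(\gamma(t)) = \nabla\phi(\gamma(t))/|\nabla\phi(\gamma(t))|$. Throughout, $\phi$ is assumed smooth with $\nabla\phi\neq 0$ on $\Omega$, so that $N$ is well defined and gives an orientation, as in the background section.

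\emph{Optimality conditions $\Rightarrow$ geodesic.} Suppose $(\lambda,\gamma)$ satisfies the optimality conditions. The second and third conditions give $\gamma\in\Gamma(p,q,\Omega)$. The first condition gives
\[
\ddot\gamma(t)=\lambda(t)\nabla\phi(\gamma(t)) = \lambda(t)|\nabla\phi(\gamma(t))|\,N(\gamma(t)).
\]
So $\ddot\gamma(t)$ is a scalar multiple of the unit normal: it is zero exactly when $\lambda(t)=0$, and otherwise parallel to $N$. By the definition, $\gamma$ is a geodesic.

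\emph{Geodesic $\Rightarrow$ optimality conditions.} Let $\gamma$ be a geodesic from $p$ to $q$. Membership in $\Gamma(p,q,\Omega)$ gives the constraint and boundary conditions. It remains to construct $\lambda$. By the definition, for each $t$ there is a scalar $\mu(t)$ with $\ddot\gamma(t)=\mu(t)N(\gamma(t))$, and we may take $\mu(t)=0$ whenever $\ddot\gamma(t)=0$. Since $N$ is a unit vector, this scalar is explicitly
\[
\mu(t)=\ddot\gamma(t)\cdot N(\gamma(t)).
\]
Set
\[
\lambda(t)=\frac{\ddot\gamma(t)\cdot\nabla\phi(\gamma(t))}{|\nabla\phi(\gamma(t))|^2}.
\]
Then $\lambda(t)\nabla\phi(\gamma(t))=\mu(t)N(\gamma(t))=\ddot\gamma(t)$, which is the first optimality condition.

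For consistency with the paper's requirement $\lambda\in C^1[0,1]$, we check regularity. If $\gamma\in C^3$ and $\phi\in C^2$, then $\lambda$ is a quotient of $C^1$ functions with denominator bounded away from zero on the compact curve, so $\lambda\in C^1$. One can also compute $\lambda$ in another way. Differentiating $\phi(\gamma(t))=0$ twice gives
\[
\nabla\phi(\gamma)\cdot\ddot\gamma+\dot\gamma^{T}\nabla^2\phi(\gamma)\dot\gamma=0,
\]
so
\[
\lambda(t)=-\frac{\dot\gamma^{T}\nabla^2\phi(\gamma)\dot\gamma}{|\nabla\phi(\gamma)|^2}.
\]
This expression only needs $\gamma\in C^1$ and $\phi\in C^2$ for continuity, and a slightly stronger hypothesis on $\phi$ for $C^1$ regularity.

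\emph{Main obstacle.} The main difficulty is the nondegeneracy assumption $\nabla\phi\neq0$ on $\Omega$. It is needed both to define $N$ and to divide by $|\nabla\phi|$ when constructing $\lambda$, so it must be stated explicitly. The matching of regularity classes is the secondary point, and the Hessian formula above handles it.
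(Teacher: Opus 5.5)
Your proposal is correct and follows the same route as the paper's proof: in one direction you read off from $\ddot\gamma=\lambda\nabla\phi(\gamma)$ that the acceleration is normal, and in the other you obtain the multiplier from the normality of $\ddot\gamma$. Your explicit formula $\lambda=\ddot\gamma\cdot\nabla\phi(\gamma)/|\nabla\phi(\gamma)|^2$, the stated hypothesis $\nabla\phi\neq 0$ on $\Omega$, and the $C^1$ regularity check make precise what the paper leaves implicit when it simply asserts that a scalar function $\lambda(t)$ exists.
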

\begin{proof}
($\Rightarrow$) Suppose ($\lambda$,$\gamma$) satisfies the optimality conditions.
Then, by definition,  $\phi(\gamma(t)) = 0$ for all $t\in [0,1]$,  $\gamma(t)$ lies on the surface $\phi=0$. Further, the conditions $\gamma(0) = p$ and $\gamma(1) = q$ imply $\gamma$ is a path from $p$ to $q$ on the surface. The last condition says $\ddot{\gamma}(t)=\lambda(t)\nabla\phi(\gamma(t))$ for all $t\in(0, 1)$, that is, the acceleration vector $\ddot{\gamma}$ is parallel to $\nabla\phi(\gamma(t))$ for all $t\in (0,1)$. Since $\nabla\phi(\gamma(t))$ is normal to the surface at  $\gamma(t)$, $\ddot{\gamma}(t)$ is parallel to its unit normal for all $t\in (0,1)$.
Hence, $\gamma$ has zero tangential acceleration and lies entirely on the surface, satisfying the definition of a geodesic.

($\Leftarrow$) Now assume $\gamma$ is a geodesic on the surface defined by $\phi=0$, connecting points $p$ and $q$. 
Then, $\phi(\gamma(t)) = 0$ for all $t\in (0, 1)$, $\gamma(0)= p$, and $\gamma(1) = q$ must be satisfied. Since $\gamma$ is a geodesic, its acceleration vector  $\ddot{\gamma}(t)$ is orthogonal to the surface at each point $\gamma(t)$, i.e., normal to the tangent plane. Because $\nabla\phi(\gamma(t))$ is normal to the tangent plane at $\gamma(t)$, there exists a scalar function $\lambda(t)$ such that  $\ddot{\gamma}(t) = \lambda(t)\nabla\phi(\gamma(t))$ for all $t\in (0,1)$.Therefore, the pair $(\lambda, \gamma)$ satisfies the optimality conditions. 
\end{proof}

\subsection{Gradient Descent-Ascent}\label{sec:grad_desc_asc}
For the inf-sup problem, we can use a Gradient Descent-Ascent approach to update $\gamma(t)$ and $\lambda(t)$ for all $t\in [0,1]$. In the $(k+1)th$ iteration, the updates are defined by
\begin{align*}
\lambda_{k+1}(t) & = \lambda_k(t)+\tau_{\lambda}\left(\frac{\delta}{\delta\lambda}\mathcal{L}[\gamma,\lambda]\right),\\
    \gamma_{k+1}(t) & = \gamma_k(t)-\tau_{\gamma}\left(\frac{\delta}{\delta\gamma}\mathcal{L}[\gamma,\lambda]\right),
\end{align*}
where $\tau_{\gamma}$ and $\tau_{\lambda}$ are tunable step sizes. Note that 
\begin{align*}
    \frac{\delta}{\delta\lambda} \mathcal{L}[\gamma,\lambda] & = \phi(\gamma(t)), \\
 \frac{\delta}{\delta\gamma}\mathcal{L}[\gamma,\lambda] & = -\ddot{\gamma}(t)+\lambda(t)\nabla\phi(\gamma(t)). 
\end{align*}
This results in the following updates:
\begin{equation}\begin{cases}
 \lambda_{k+1}(t)  = \lambda_k(t)+\tau_{\lambda}\phi(\gamma_{k}(t)), \\
    \gamma_{k+1}(t)  = \gamma_k(t)-\tau_{\gamma}\left(-\ddot{\gamma}_k(t)+\lambda_{k+1}(t)\nabla\phi(\gamma_k(t))\right).
\end{cases}\label{eqs:GDA}\end{equation}

One issue with the algorithm is that the maximization of  $\mathcal{L}$ with respect to $\lambda$ may not be well-defined, potentially causing a lack of convergence for  $\lambda_k(t)$. When this update scheme was applied to approximate the geodesic distance between antipodal points on the surface of a unit sphere, the algorithm failed to  converge. Figure \ref{fig:GDA-error} illustrates the absolute error in the approximate geodesic distance relative to  the number of iterations. 
\begin{figure}[H]
    \centering
    \includegraphics[width=0.5\linewidth]{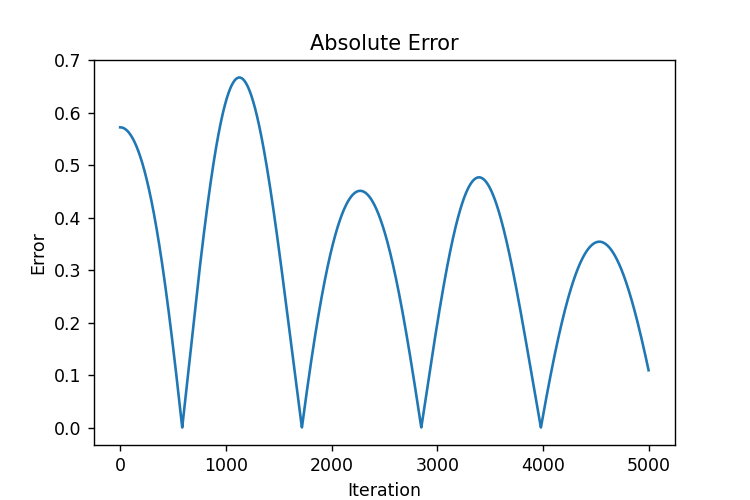}
    \caption{Absolute error of the approximated geodesic distance between antipodal points on the sphere, plotted against the number of iterations for the update scheme described above.}
    \label{fig:GDA-error}
\end{figure}

\subsection{Regularization}
To enhance the algorithm's performance,  we  introduce a regularization term to the functional  $\mathcal{L}$ to ensure a well-defined maximum of $\mathcal{L}$ with respect to $\lambda$. For $\varepsilon>0$, we define
\[\mathcal{L}_{\varepsilon}[\gamma,\lambda] = \frac{1}{2}\int_0^1 |\dot{\gamma}|^2~dt + \int_0^1\lambda(t)\phi(\gamma(t))dt -\frac{\varepsilon}{2}\int_0^1\lambda^2(t)~dt.\]
We then have  
\begin{align*}
  \frac{\delta}{\delta\lambda} \mathcal{L}_{\varepsilon}[\gamma,\lambda] & = \phi(\gamma(t)) - \varepsilon\lambda(t), \\
    \frac{\delta}{\delta\gamma}\mathcal{L}_{\varepsilon}[\gamma,\lambda] & = -\ddot{\gamma}(t)+\lambda(t)\nabla\phi(\gamma(t)).
\end{align*}
Thus, the Gradient Descent-Ascent updates become
\[\begin{cases}
   \lambda_{k+1}(t) = \lambda_k(t) + \tau_{\lambda}(\phi(\gamma_{k}(t))-\varepsilon\lambda_{k+1}(t)),\\
    \gamma_{k+1}(t) = \gamma_k(t)-\tau_{\gamma}(-\ddot{\gamma}_k(t)+\lambda_{k+1}(t)\cdot \nabla\phi(\gamma_k(t))).
\end{cases}\]
This can be rewritten as
\[\begin{cases}
    \lambda_{k+1}(t) = \frac{1}{1+\varepsilon\tau_{\lambda}}(\lambda_k(t)+\tau_{\lambda}\phi(\gamma_{k}(t))),\\
    \gamma_{k+1}(t) = \gamma_k(t)-\tau_{\gamma}(-\ddot{\gamma}_k(t)+\lambda_{k+1}(t)\cdot \nabla\phi(\gamma_k(t))).\\
\end{cases}\]

\subsection{Acceleration}\label{sec:acceleration}
The primal-dual hybrid gradient (PDHG) algorithm is an efficient first-order optimization algorithm for solving saddle point problems \cite{Chambolle:2011}. Generally,  if $\langle \cdot, \cdot \rangle$ represents  an inner product on Hilbert spaces $X$ and $Y$,  with the norm defined as $\|\cdot \| = \langle \cdot, \cdot \rangle ^{1/2}$, then the PDHG algorithm is employed to solve saddle-point problems of the primal-dual form 
\[\min_{y\in Y}\max_{x\in X}  g(x) + \left(\langle Ax, y\rangle - f^*(y)\right),
\]
where functionals $g:X\to [0,+\infty]$ and $f:Y\to [0,+\infty]$ are proper, convex, and lower semi-continuous, $A:X\to Y$ is a continuous linear operator, and $f^*$ is the convex conjugate of $f$.
The PDHG algorithm uses gradient ascent and descent steps as follows:
\[\begin{cases}
    x_{k+1} = \text{prox}_{\tau_x g}(x_{k} - \tau_x L^*y_{k}), \\
    \Tilde{x}_{k+1} = x_{k+1}+ \omega(x_{k+1} -x_k),\\
    y_{k+1} = \text{prox}_{\tau_y f^*}(y_{k} - \tau_y L\Tilde{x}_{k+1}),
\end{cases}\]
where $\tau_y,\tau_x$, and $\omega$ are constants. 
The convergence of this algorithm is guaranteed when $\tau_x\tau_y\|A\|^2 <1$. In \cite{Chambolle:2011}
, the convergence of the PDHG algorithm was analyzed for $\omega = 1$, demonstrating a convergence rate of $O(1/k)$ in finite dimensional spaces.

The relaxation step in the Primal-Dual Hybrid Gradient (PDHG) algorithm is designed to accelerate convergence by leveraging information from previous iterations to improve the current update -- a concept similar to momentum in optimization. Inspired by such relaxation step, we introduce the following update: 
\begin{equation}\label{eqs:PDHG_update}
\begin{cases}
    \lambda_{k+1}(t) = \frac{1}{1+\varepsilon\tau_{\lambda}}(\lambda_k(t)+\tau_{\lambda}\phi(\gamma_{k}(t))), \\
    \Tilde{\lambda}_{k+1}(t) = \lambda_{k+1}(t)+ \omega(\lambda_{k+1}(t)-\lambda_k(t)),\\
    \gamma_{k+1}(t) = \gamma_k(t) - \tau_{\gamma}(-\ddot{\gamma}_k(t)+\Tilde{\lambda}_{k+1}(t)\nabla\phi(\gamma_k(t))).
\end{cases}\end{equation}
where $\omega \in (0, 1]$ is the relaxation parameter. This creates a sort of ``extrapolated" variable $\tilde \lambda$, which combines the current and previous values of the $\lambda$ variable. Before implementing the algorithm, we conduct a primary analysis using ODE approach to gain deeper insights into the behavior and stability of the update.

Note that if $\phi$ is linear --for instance,  $\phi(\gamma)=a\cdot \gamma$, where $a\in \mathbb{R}^3$ is a constant vector,  then the objective functional falls into the class suitable for applying  the PDHG method \cite{Chambolle:2011}. In the present infinite dimensional setting, using the optimal relaxation parameter $\omega=1$, the update becomes 
\begin{equation}\label{eqs:PDHG_update+}
\begin{cases}
    \lambda_{k+1}(t) = \frac{1}{1+\varepsilon\tau_{\lambda}}(\lambda_k(t)+\tau_{\lambda}\phi(\gamma_{k}(t))), \\
    \gamma_{k+1}(t) = \gamma_k(t) - \tau_{\gamma}(-\ddot{\gamma}_{k+1}(t)+(2\lambda_{k+1}(t) -\lambda_k(t)) \nabla\phi(\gamma_k(t))).
\end{cases}\end{equation}
A detailed convergence rate analysis will be presented in Section \ref{sec5}: the result asserts that a convergence rate of $O(1/k)$ is ensured if $\tau_\lambda \tau_\gamma |a|^2 <1$. This is consistent with the known result for PDHG. 

\section{Analysis of dynamic behavior}\label{sec:analysis}
One effective way for analyzing optimization algorithms is to first examine their continuous limit through differential equations. This method provides valuable insights into the convergence properties of the algorithm. 

We begin by reformulating the proposed algorithm as a PDE system. 

\subsection{PDE System}\label{sec:PDEsystem} The discrete dynamics can be viewed as a discretization of the continuous PDE system. To illustrate this, we note that 
    \begin{align*}
        \Tilde{\lambda}_{k+1} & = (1+w)\lambda_{k+1}-w\lambda_k\\
        & = \frac{1+w}{1+\varepsilon\tau_{\lambda}}(\lambda_k+\tau_{\lambda}\phi(\gamma_k))-w\lambda_k\\
        & = \frac{1-\varepsilon w\tau_{\lambda}}{1+\varepsilon\tau_{\lambda}}\lambda_k+\frac{(1+w)\tau_{\lambda}}{1+\varepsilon\tau_{\lambda}}\phi(\gamma_k)\\
        & = \left(1-\frac{1+w}{1+\varepsilon \tau_{\lambda}}\varepsilon\tau_{\lambda}\right)\lambda_k+\frac{(1+w)\tau_{\lambda}}{1+\varepsilon\tau_{\lambda}}\phi(\gamma_k).
    \end{align*}
    Hence,
    \begin{align*}
        \frac{\lambda_{k+1}-\lambda_k}{\tau_{\lambda}} & = -\frac{\varepsilon}{1+\varepsilon\tau_{\lambda}}\lambda_k + \frac{1}{1+\varepsilon\tau_{\lambda}}\phi(\gamma_k),\\
        \frac{\gamma_{k+1}-\gamma_k}{\tau_{\gamma}} & = \ddot{\gamma}_k(t)-\nabla\phi(\gamma_k)\cdot\left[ \left(1-\frac{1+w}{1+\varepsilon \tau_{\lambda}}\varepsilon\tau_{\lambda}\right)\lambda_k+\frac{(1+w)\tau_{\lambda}}{1+\varepsilon\tau_{\lambda}}\phi(\gamma_k) \right].
    \end{align*}
    For any $k>0$, let $\tau: =\tau_{\gamma}=\frac{\tau_{\lambda}}{1+\varepsilon\tau_{\lambda}}$ and  $s_k=k\tau$. We assume that $(\lambda_k, \gamma_k)$ correspond to values at $s=s_k$ for some sufficiently smooth functions $(\lambda(s, t), \gamma(s, t))$.
 As $\tau  \to 0$, while keeping $\alpha=(1+\omega)\tau$, we have 
   \begin{equation}\label{eq:equi0}
   \begin{cases}
        \lambda' & = -\varepsilon\lambda + \phi(\gamma),\\
        \gamma' & = \ddot{\gamma} - \nabla\phi(\gamma)\cdot((1-\alpha\varepsilon)\lambda + \alpha\phi(\lambda)),
    \end{cases}
    \end{equation}
    where $\lambda = \lambda(t, s)$ and $\lambda' = \frac{\partial}{\partial s}\lambda(t,s)$ for the training step index $s$ and curve parameter $t$. 

Regarding the PDE system, we state the following result:  
\begin{theorem} As $\tau_\lambda, \tau_\gamma \to 0$, the update in \ref{eqs:PDHG_update} can be expressed as a discrete-time update of the following PDE system:
\begin{align*}
        \lambda' & = -\varepsilon\lambda + \phi(\gamma),\\
        \gamma' & = \ddot{\gamma} - \nabla\phi(\gamma)\cdot((1-\alpha\varepsilon)\lambda + \alpha\phi(\gamma))
    \end{align*}
    with the initial condition $\lambda(t, 0)=\lambda_0(t)$ and $\gamma(t, 0)=\gamma_0(t)$. If $\nabla \phi$ is Lipschitz continuous, then there exists a unique solution for the PDE system. An equilibrium of the PDE system satisfies: 
  \begin{equation}\label{eq:equil}
      \begin{cases}
        -\varepsilon\lambda^* +\phi(\gamma^*) = 0, \\
        \ddot{\gamma}^* - ((1-\alpha\varepsilon)\lambda^*+\alpha\phi(\gamma^*))\nabla\phi(\gamma^*) = 0.
        \end{cases}
    \end{equation}
\end{theorem}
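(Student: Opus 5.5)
The theorem makes three claims, and I would treat them in turn: the consistency of the scheme (\ref{eqs:PDHG_update}) with the PDE system, well-posedness of the PDE, and the form of its equilibria.

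\textbf{Step 1: consistency.} Start from the rewritten updates derived just before the statement. Set $\tau=\tau_\gamma=\tau_\lambda/(1+\varepsilon\tau_\lambda)$ and $s_k=k\tau$, and assume $\lambda_k(t)=\lambda(t,s_k)$ and $\gamma_k(t)=\gamma(t,s_k)$ for smooth $(\lambda,\gamma)$. Taylor expansion in $s$ gives $(\lambda_{k+1}-\lambda_k)/\tau=\lambda'+O(\tau)$. By construction the right-hand side is exactly $-\varepsilon\lambda_k+\phi(\gamma_k)$, since the factors $1/(1+\varepsilon\tau_\lambda)$ are absorbed into $\tau$.

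For the $\gamma$-equation, the coefficient of $\lambda_k$ is $1-\varepsilon(1+\omega)\tau_\lambda/(1+\varepsilon\tau_\lambda)=1-\alpha\varepsilon$, and the coefficient of $\phi(\gamma_k)$ is $\alpha$, where $\alpha=(1+\omega)\tau$. Hence the scheme is the forward Euler discretization, with truncation error $O(\tau)$, of the stated system. Here $\alpha$ is held as a fixed parameter in the formal limit.

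\textbf{Step 2: well-posedness.} Write the system as an abstract evolution equation $U'=AU+F(U)$ for $U=(\gamma,\lambda)$. Here $A=\mathrm{diag}(\partial_t^2,-\varepsilon)$ acts on $H^1_0$-type perturbations of the boundary data: I would set $\gamma=\ell+v$ with $\ell$ the linear interpolant of $p$ and $q$, and impose the Dirichlet condition $v=0$ at $t=0,1$. The operator $\partial_t^2$ generates an analytic contraction semigroup on $L^2(0,1)^3$, so $A$ does as well.

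Set $F(U)=\bigl(-\nabla\phi(\gamma)((1-\alpha\varepsilon)\lambda+\alpha\phi(\gamma)),\ \phi(\gamma)\bigr)$. When $\nabla\phi$ is Lipschitz and $\phi$, $\nabla\phi$ are locally bounded, $F$ is locally Lipschitz on $L^\infty$ (or $C([0,1])$), because it is built from products of Lipschitz terms. Duhamel's formula and a Banach fixed-point argument in $C([0,S];C[0,1]^4)$ then give a unique local mild solution.

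\textbf{Step 3: global existence.} This is the step I expect to be the main obstacle, since the product $\lambda\nabla\phi(\gamma)$ is not globally Lipschitz. Two routes are available.

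\begin{itemize}
\item Assume the structural condition that $\nabla\phi$ and $\phi$ are globally bounded on the relevant region, as holds for truncated signed distance functions. Then $|\lambda(s)|\le |\lambda_0|+C s$ from the linear $\lambda$-equation, and a Gronwall estimate on $\gamma$ via the maximum principle for the heat semigroup prevents blow-up.
\item Alternatively, use an energy such as $E=\tfrac12\int|\dot\gamma|^2+\int\lambda\phi(\gamma)+\dots$ to control the norms.
\end{itemize}

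Absent global bounds, I would state uniqueness globally and existence locally, extending to all times under the boundedness assumption. Uniqueness in either case follows from the local Lipschitz estimate together with Gronwall's inequality applied to the difference of two solutions.

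\textbf{Step 4: equilibria.} Set $\lambda'=\gamma'=0$ in the system. This gives (\ref{eq:equil}) directly, with the boundary conditions $\gamma^*(0)=p$ and $\gamma^*(1)=q$ inherited from the solution class.
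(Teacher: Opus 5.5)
Your Steps 1 and 4 are exactly the paper's argument. Its only justification is the computation in Section~\ref{sec:PDEsystem}: the same $\tau=\tau_\gamma=\tau_\lambda/(1+\varepsilon\tau_\lambda)$ and $s_k=k\tau$, with $\alpha=(1+\omega)\tau$ held fixed (which tacitly sends $\omega=\alpha/\tau-1\to\infty$, so the right-hand sides agree exactly and only the difference quotients are approximated), followed by setting $\lambda'=\gamma'=0$. The paper gives no proof at all of the existence--uniqueness sentence. Your Steps 2--3 therefore go beyond it, and that is where your proposal falls short of the statement.

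The local theory is fine, provided you take the Dirichlet heat semigroup on $L^\infty$, where it is a contraction by the maximum principle, rather than on $L^2$, where $F$ is not locally Lipschitz. But the theorem, which is then used for $s\to+\infty$, asserts a global solution assuming only that $\nabla\phi$ is Lipschitz. You end with either local existence, or global existence under the added hypothesis that $\phi$ and $\nabla\phi$ are bounded. That hypothesis already excludes the paper's sphere example $\phi=\frac12(|x|^2-1)$, whose gradient is globally Lipschitz. Route (b) does not close the gap as written. Read as the Lagrangian $\mathcal{L}_\varepsilon$, your energy is a saddle functional, and along the flow
\[
\frac{d}{ds}\mathcal{L}_\varepsilon=-\int_0^1|\gamma'|^2\,dt-\alpha\int_0^1\lambda'\,\nabla\phi(\gamma)\cdot\gamma'\,dt+\int_0^1\lambda'^2\,dt,
\]
which has no sign.

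The missing idea is to complete the square. Put $c=1-\alpha\varepsilon$ and $w=\phi(\gamma)+\frac{c}{\alpha}\lambda$. Since $c+\alpha\varepsilon=1$, the system reads
\[
\gamma'=\ddot\gamma-\alpha w\,\nabla\phi(\gamma),\qquad \lambda'=w-\frac{\lambda}{\alpha}.
\]
For frozen $\lambda$, the $\gamma$-equation is the $L^2$-gradient flow of the nonnegative functional $\int_0^1\bigl(\frac12|\dot\gamma|^2+\frac{\alpha}{2}w^2\bigr)dt$. Define
\[
E(s)=\int_0^1\Bigl(\tfrac12|\dot\gamma|^2+\tfrac{\alpha}{2}w^2+\lambda^2\Bigr)dt .
\]
Using $\gamma'=0$ at $t=0,1$, one finds
\[
E'=-\int_0^1|\gamma'|^2\,dt+c\int_0^1 w^2\,dt+\Bigl(2-\frac{c}{\alpha}\Bigr)\int_0^1\lambda w\,dt-\frac{2}{\alpha}\int_0^1\lambda^2\,dt\le KE,
\]
with $K$ depending only on $\alpha$ and $\varepsilon$. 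This gives global existence as follows.
\begin{enumerate}
\item $\|\dot\gamma(s)\|_{L^2}$ grows at most exponentially.
\item The bound $|\gamma(t,s)|\le|p|+\|\dot\gamma(s)\|_{L^2}$ controls $\gamma$ in $L^\infty$, so $\phi(\gamma)$ is bounded on bounded time intervals.
\item The linear $\lambda$-equation then bounds $\lambda$ in $L^\infty$.
\item The blow-up alternative from your Step 2 now gives global existence for $\gamma_0\in H^1$ and $\lambda_0\in L^\infty$.
\end{enumerate}
This uses only $\alpha>0$ and local Lipschitz continuity of $\nabla\phi$, with no growth or boundedness assumption on $\phi$. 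The energy identity can be justified by parabolic regularity of $\gamma$, or on Galerkin approximations.
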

The goal is to show that, as $s\to +\infty$, the PDE system converges to the optimal solution $\lambda^*(t)$ and $\gamma^*(t)$ governed by (\ref{eq:equil}).  Additionally, as $\epsilon \to 0$, the curve  $\gamma(t)$ approaches a geodesic.  

\subsection{Equilibrium curves}

\begin{lemma}\label{lem:approx_geo}
    If $\gamma^*, \lambda^*$ is an equilibrium solution to the system, 
    \[\begin{cases}
        \lambda' & = -\varepsilon\lambda + \phi(\gamma),\\
        \gamma' & = \ddot{\gamma} - ((1-\alpha\varepsilon)\lambda + \alpha\phi(\gamma))\nabla\phi(\gamma),
    \end{cases}\]
    then $\gamma^*$ approximates a geodesic on $\Omega$ when $\varepsilon$ is small. Specifically, as $\varepsilon \to 0$, both $\phi(\gamma(t)) \to 0$ and $\ddot{\gamma}^*\cdot \dot{\gamma}(t) \to 0$. 
\end{lemma}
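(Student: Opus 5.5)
The plan is to read off the equilibrium equations (\ref{eq:equil}) and make the dependence on $\varepsilon$ explicit. The first equation gives $\phi(\gamma^*) = \varepsilon\lambda^*$. Substituting this into the second gives
\[
\ddot{\gamma}^* = \big((1-\alpha\varepsilon)\lambda^* + \alpha\varepsilon\lambda^*\big)\nabla\phi(\gamma^*) = \lambda^*\nabla\phi(\gamma^*),
\]
so that $\ddot\gamma^*\cdot\dot\gamma^* = \lambda^*\,\nabla\phi(\gamma^*)\cdot\dot\gamma^*$. Hence both quantities in the statement are controlled by $\lambda^*$:
\[
\phi(\gamma^*) = \varepsilon\lambda^*, \qquad \nabla\phi(\gamma^*)\cdot\dot\gamma^* = \frac{d}{dt}\phi(\gamma^*(t)) = \varepsilon\dot\lambda^*(t).
\]
Therefore
\[
\ddot\gamma^*\cdot\dot\gamma^* = \varepsilon\lambda^*\dot\lambda^* = \frac{\varepsilon}{2}\frac{d}{dt}(\lambda^*)^2 .
\]
Both $\phi(\gamma^*)$ and $\ddot\gamma^*\cdot\dot\gamma^*$ thus carry an explicit factor $\varepsilon$. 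It remains to show that $\lambda^*$ and $\dot\lambda^*$ stay bounded uniformly in $\varepsilon$, which is the substantive step.

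For the uniform bound I would use the boundary value problem $\ddot\gamma^* = \lambda^*\nabla\phi(\gamma^*)$, $\gamma^*(0)=p$, $\gamma^*(1)=q$, together with the structural assumptions on $\phi$: $\nabla\phi$ Lipschitz and $|\nabla\phi|\ge c>0$ near $\Omega$, as for a signed distance function. Dotting the equation with $\nabla\phi(\gamma^*)$ gives
\[
\lambda^* = \frac{\ddot\gamma^*\cdot\nabla\phi(\gamma^*)}{|\nabla\phi(\gamma^*)|^2}.
\]
Differentiating the identity $\varepsilon\lambda^* = \phi(\gamma^*)$ twice yields
\[
\varepsilon\ddot\lambda^* = \nabla\phi\cdot\ddot\gamma^* + \dot\gamma^{*T}D^2\phi\,\dot\gamma^* = \lambda^*|\nabla\phi|^2 + \dot\gamma^{*T}D^2\phi\,\dot\gamma^*.
\]
This is a singularly perturbed equation of the form $\varepsilon\ddot\lambda - |\nabla\phi|^2\lambda = f$. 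With the positive coefficient $|\nabla\phi|^2\ge c^2$, a maximum principle argument gives
\[
\|\lambda^*\|_\infty \le \max\Big(|\lambda^*(0)|,\,|\lambda^*(1)|,\,c^{-2}\|D^2\phi\|\,\|\dot\gamma^*\|_\infty^2\Big).
\]
The endpoint values vanish because $\phi(p)=\phi(q)=0$. So $\lambda^*$ is bounded once $\dot\gamma^*$ is bounded.

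Bounding $\|\dot\gamma^*\|_\infty$ is where I expect the main obstacle. I would try an energy estimate: multiply $\ddot\gamma^* = \lambda^*\nabla\phi$ by $\dot\gamma^*$ and use
\[
\frac{d}{dt}\Big(\tfrac12|\dot\gamma^*|^2\Big) = \frac{\varepsilon}{2}\frac{d}{dt}(\lambda^*)^2 .
\]
This shows $\tfrac12|\dot\gamma^*|^2 - \tfrac{\varepsilon}{2}(\lambda^*)^2$ is constant in $t$, so the speed is nearly constant. A bound on the speed then follows if the equilibrium lies in a class of curves with bounded energy, for instance the one selected by the dynamics from the initial data. The bound on $\dot\lambda^*$ needed to control $\ddot\gamma^*\cdot\dot\gamma^*$ pointwise would come from the same singular perturbation equation, via standard interior estimates giving $\varepsilon^{1/2}\|\dot\lambda^*\|\lesssim$ data. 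Even if only $\varepsilon^{1/2}$ control is available there, $\varepsilon\lambda^*\dot\lambda^*=O(\varepsilon^{1/2})\to0$ still holds. Combining these bounds gives $\phi(\gamma^*)=O(\varepsilon)$ and $\ddot\gamma^*\cdot\dot\gamma^*\to 0$. Since $\ddot\gamma^*$ is parallel to $\nabla\phi(\gamma^*)$, Lemma 2 then identifies the limit curve as a geodesic.
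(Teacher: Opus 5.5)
Your first paragraph is essentially the paper's entire proof. The paper reads off $\varepsilon\lambda^*=\phi(\gamma^*)$ and $\varepsilon\dot\lambda^*=\nabla\phi(\gamma^*)\cdot\dot\gamma^*$, notes that $\ddot\gamma^*$ is parallel to $\nabla\phi(\gamma^*)$, and lets $\varepsilon\to0$, tacitly treating $\lambda^*$ and $\dot\lambda^*$ as $O(1)$. Your identities $\ddot\gamma^*=\lambda^*\nabla\phi(\gamma^*)$ and $\ddot\gamma^*\cdot\dot\gamma^*=\varepsilon\lambda^*\dot\lambda^*$ are correct. You are right that the real content is bounds uniform in $\varepsilon$, which the paper never supplies. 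Your caution about $\dot\lambda^*$ is also justified, because it is in general not bounded. On the unit sphere with $\phi=\frac12(|x|^2-1)$, the equilibrium near the minimizing geodesic has $\lambda^*\approx-d(p,q)^2$ in the interior but $\lambda^*(0)=\lambda^*(1)=0$. Your equation $\varepsilon\ddot\lambda^*=|\nabla\phi|^2\lambda^*+\dot\gamma^{*T}D^2\phi\,\dot\gamma^*$ forces this transition into layers of width $\sqrt\varepsilon$, where $\dot\lambda^*\sim\varepsilon^{-1/2}$. So only $\varepsilon\dot\lambda^*=O(\sqrt\varepsilon)$ holds.

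The genuine gap is the bound on $\|\dot\gamma^*\|_\infty$. As written it is circular: ``the speed is nearly constant'' presupposes that $\varepsilon(\lambda^*)^2$ is small. The appeal to a bounded-energy class ``selected by the dynamics'' is a hypothesis that neither the lemma nor the paper provides. Some such hypothesis is unavoidable. Eliminating $\lambda^*=\phi(\gamma^*)/\varepsilon$, the equilibrium equations are the Euler--Lagrange equations of $E_\varepsilon(\gamma)=\frac12\int_0^1|\dot\gamma|^2\,dt+\frac{1}{2\varepsilon}\int_0^1\phi(\gamma)^2\,dt$. On the sphere this functional has critical points that shadow great circles wound $n\sim\varepsilon^{-1/2}$ times, with $\phi(\gamma^*)$ of order one, even inside $\Sigma$.

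Once the hypotheses are explicit, your two estimates do close. Assume $\gamma^*\subset\Sigma$ and $\int_0^1|\dot\gamma^*|^2\,dt\le C$ uniformly in $\varepsilon$. For instance, if $\gamma^*$ minimizes $E_\varepsilon$, then $C=d(p,q)^2$. In that case $\int_0^1\phi(\gamma^*)^2\,dt\le\varepsilon d(p,q)^2$ together with $\|u\|_\infty^2\le\|u\|_2^2+2\|u\|_2\|u'\|_2$ even yields $\gamma^*\subset\Sigma$ for small $\varepsilon$. Then:
\begin{enumerate}
\item Your conservation law gives $|\dot\gamma^*(t)|^2=|\dot\gamma^*(0)|^2+\varepsilon\lambda^*(t)^2$. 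Hence $|\dot\gamma^*(0)|^2\le C$ and $\|\dot\gamma^*\|_\infty^2\le C+\varepsilon\Lambda^2$, where $\Lambda=\|\lambda^*\|_\infty$.
\item Insert this into your maximum-principle bound $\Lambda\le K\|\dot\gamma^*\|_\infty^2$, with $K=\sup_\Sigma\|D^2\phi\|/\nu$. This gives $K\varepsilon\Lambda^2-\Lambda+KC\ge0$.
\item So either $\Lambda$ is at most the small root, which is $\le 2KC$, or $\Lambda$ is at least the large root, which exceeds $1/(2K\varepsilon)$ once $4K^2C\varepsilon<1$.
\item The second alternative is excluded: $\gamma^*\subset\Sigma$ gives $\varepsilon\Lambda=\|\phi(\gamma^*)\|_\infty\le a$, and Assumption A gives $a\le 1/(2K)$.
\item With $\Lambda\le 2KC$, the same equation gives $\varepsilon\|\ddot\lambda^*\|_\infty=O(1)$. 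The interpolation $\|u'\|_\infty\le\frac2h\|u\|_\infty+h\|u''\|_\infty$ with $h=\sqrt\varepsilon$ then gives $\varepsilon\|\dot\lambda^*\|_\infty=O(\sqrt\varepsilon)$.
\end{enumerate}
Therefore $\phi(\gamma^*)=O(\varepsilon)$, and both $\nabla\phi(\gamma^*)\cdot\dot\gamma^*$ and $\ddot\gamma^*\cdot\dot\gamma^*$ are $O(\sqrt\varepsilon)$.
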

\begin{proof} 
    Assume $\gamma^*$ and $\lambda^*$ is an equilibrium solution to the above system.
    Equations \ref{eq:equil} imply that 
    \[\varepsilon \lambda^* = \phi(\gamma^*).\]
    That is, $\phi(\gamma^*) \to 0$, as $\varepsilon\to 0$ , which means that $\gamma^*\to \Gamma(p,q,\Omega)$ for all $t\in[0,1]$.
    Also, taking the derivative gives us 
    \begin{equation}
    \varepsilon \dot{\lambda}^* = \dot{\gamma}^*\cdot\nabla\phi(\gamma^*).
    \end{equation}
     {Therefore, $\dot{\gamma}^*\cdot\nabla\phi(\gamma^*)\to 0$ as $\varepsilon \to 0$, so $\nabla\phi(\gamma^*)$ approximates the normal direction to the surface.}

    Further, equations \ref{eq:equil} also imply $\ddot{\gamma}^* = ((1-\alpha\varepsilon)\lambda^* + \alpha\phi(\gamma^*))\nabla\phi(\gamma^*)$.
    {Hence, $\ddot{\gamma}^*$ is parallel to $\nabla\phi(\gamma^*)$, an approximation of the normal direction to the surface.} 
    Thus, according to the definition \ref{geodesic} of a geodesic, $\gamma^*$ approximates a geodesic when $\varepsilon$ is small.  
\end{proof}

It can also be shown that $\gamma$ is a good approximation to the shortest geodesic from $p$ to $q$ on $\Omega$.

\begin{lemma}\label{lem:min_geo}
    If $\gamma^*, \lambda^*$ is an equilibrium solution to the system
   \[\begin{cases}
        \lambda' & = -\varepsilon\lambda + \phi(\gamma),\\
        \gamma' & = \ddot{\gamma} - \nabla\phi(\gamma)((1-\alpha\varepsilon)\lambda + \alpha\phi(\gamma)),
    \end{cases}\]
    then, when $\varepsilon$ is small, $\gamma^*$ approximates a minimal geodesic for the surface represented by $\{x|\quad  \phi(x)=0\}$. 
    Specifically, $\gamma^*$ minimizes $\int_0^1|\dot{\gamma}^*(t)|^2~dt$ under the constraint $\phi(\gamma(t)) = \varepsilon\lambda^*(t)$ (and as $\varepsilon \to 0$, both $\phi(\gamma(t)) \to 0$ and $\ddot{\gamma}^*\cdot \dot{\gamma}(t) \to 0$.)
\end{lemma}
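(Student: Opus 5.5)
The plan is to read the equilibrium equations (\ref{eq:equil}) as the Euler--Lagrange system of a constrained minimization problem in which the constraint set is the \emph{perturbed} family of level sets $\{\phi = \varepsilon\lambda^*(t)\}$. The argument then mirrors the derivation of the optimality conditions in Section 2. First, substitute $\phi(\gamma^*) = \varepsilon\lambda^*$ from the first equation into the second. This gives
\[
\ddot{\gamma}^* = \bigl((1-\alpha\varepsilon)\lambda^* + \alpha\varepsilon\lambda^*\bigr)\nabla\phi(\gamma^*) = \lambda^*\nabla\phi(\gamma^*).
\]
So the pair $(\gamma^*,\lambda^*)$ satisfies
\[
-\ddot{\gamma}^* + \lambda^*\nabla\phi(\gamma^*) = 0,\qquad \phi(\gamma^*(t)) - \varepsilon\lambda^*(t) = 0,\qquad \gamma^*(0)=p,\ \gamma^*(1)=q .
\]

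Next, fix the function $c(t) := \varepsilon\lambda^*(t)$ and consider
\[
\min\Bigl\{\tfrac12\int_0^1|\dot\gamma|^2\,dt \;:\; \gamma(0)=p,\ \gamma(1)=q,\ \phi(\gamma(t)) = c(t)\Bigr\},
\]
with Lagrangian $\tfrac12\int|\dot\gamma|^2 + \int\mu(t)\bigl(\phi(\gamma)-c(t)\bigr)\,dt$. The variational computation of Section 2.2 goes through verbatim, since $c$ does not depend on $\gamma$. It shows that the stationarity conditions of this problem are exactly the displayed system with $\mu=\lambda^*$. So $\gamma^*$ is a critical point of the energy under the constraint $\phi(\gamma)=\varepsilon\lambda^*$.

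To upgrade \emph{critical point} to \emph{minimizer}, I would argue as in Lemma 2 and Lemma 1. The relation $\ddot\gamma^* \parallel \nabla\phi(\gamma^*)$ says that $\gamma^*$ has no acceleration tangential to the constraint set. Together with the fact that the energy functional is smooth and the equilibrium is the one selected by the descent dynamics, this identifies $\gamma^*$ with the minimizer, under the standing assumption of Section 2 that the relevant geodesic is unique. Alternatively, one can note that $\mathcal{L}_\varepsilon$ is concave in $\lambda$, so $\sup_\lambda$ yields the penalized energy $\tfrac12\int|\dot\gamma|^2+\tfrac1{2\varepsilon}\int\phi(\gamma)^2$. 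Its minimizer is an equilibrium, and any competitor $\gamma$ satisfying $\phi(\gamma)=\phi(\gamma^*)$ has the same penalty term, so it cannot have smaller energy. This shows the minimizer of the penalized problem minimizes $\int|\dot\gamma|^2$ under the constraint $\phi(\gamma)=\varepsilon\lambda^*$.

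Finally, the limiting statements follow from Lemma \ref{lem:approx_geo}. Since $\phi(\gamma^*)=\varepsilon\lambda^*$, we get $\phi(\gamma^*)\to0$, provided $\lambda^*$ stays bounded as $\varepsilon\to0$. Differentiating, $\dot\gamma^*\cdot\nabla\phi(\gamma^*)=\varepsilon\dot\lambda^*\to0$, and hence $\ddot\gamma^*\cdot\dot\gamma^* = \lambda^*\,\nabla\phi(\gamma^*)\cdot\dot\gamma^* = \varepsilon\lambda^*\dot\lambda^*\to0$. The minimal energy over the perturbed constraint then converges to $d^2(p,q)$, so $\gamma^*$ approximates a minimal geodesic.

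The main obstacle is the minimality step. Critical points need not be minimizers (compare the long great-circle arc), so one must either identify $\gamma^*$ with the global minimizer of the penalized energy or invoke uniqueness. A secondary obstacle is obtaining $\varepsilon$-uniform bounds on $\lambda^*$ and $\dot\lambda^*$, which the limit arguments require.
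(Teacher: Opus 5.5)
Your reduction is correct. Substituting $\phi(\gamma^*)=\varepsilon\lambda^*$ gives $\ddot\gamma^*=\lambda^*\nabla\phi(\gamma^*)$, so $\gamma^*$ is a constrained critical point for $\phi(\gamma)=\varepsilon\lambda^*$; this is what the paper's first-variation computation amounts to.

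The gap is the minimality step, where you suspected it, and your first argument does not close it. The hypothesis only says $(\gamma^*,\lambda^*)$ is \emph{an} equilibrium, not which one the descent dynamics selects. Uniqueness of the minimal geodesic does not make equilibria unique. Equilibria are exactly the critical points of $E_\varepsilon(\gamma)=\tfrac12\int_0^1|\dot\gamma|^2\,dt+\tfrac{1}{2\varepsilon}\int_0^1\phi(\gamma)^2\,dt$ (with $\lambda^*=\phi(\gamma^*)/\varepsilon$), and these need not be constrained minimizers.

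For example, take $\phi(x)=\tfrac12(|x|^2-1)$ and non-antipodal $p,q$. Minimizing $E_\varepsilon$ over planar curves that go the long way around the origin gives, for small $\varepsilon$, an equilibrium (the planar Euler--Lagrange equation is the full one, since $\nabla\phi(\gamma)=\gamma$ lies in the plane) close to the long great arc of length $L>\pi$. Now take the out-of-plane variation with profile $\sin(\pi t)$, kept on the spheres $|\gamma(t)|=|\gamma^*(t)|$. It lowers the energy at second order; on the unit sphere its second variation is $\pi^2-L^2<0$. So for a general equilibrium the claim is false, and no argument can fill the gap in that generality.

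The paper's own proof does not close it either. It only tests curves $\gamma^*+\delta a$ with $\phi(\gamma^*+\delta a)=\varepsilon\lambda^*$ for \emph{every} $\delta$, and then invokes convexity of $\delta\mapsto\int_0^1|\dot\gamma^*+\delta\dot a|^2\,dt$. On a curved level set such straight-line admissible families are essentially trivial; on a sphere they force $a\equiv 0$. A general competitor is not joined to $\gamma^*$ by an admissible segment, so convexity along lines says nothing about the curved constraint set. That is exactly where the negative direction above lives.

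Your penalized argument does prove a correct statement: the lemma holds for the equilibrium given by a global minimizer of $E_\varepsilon$. Such a minimizer exists by the direct method and is an equilibrium by its Euler--Lagrange equation. Any competitor with $\phi(\gamma)=\phi(\gamma^*)$ pays the same penalty, so it cannot have less kinetic energy. Restate the lemma for that equilibrium and drop the uniqueness/descent-selection sentence.

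This route also removes your secondary obstacle. Comparing with a minimal geodesic $\bar\gamma\in\Gamma(p,q,\Omega)$ gives $E_\varepsilon(\gamma^*)\le E_\varepsilon(\bar\gamma)=\tfrac12 d^2(p,q)$. Hence $\int_0^1|\dot\gamma^*|^2\,dt\le d^2(p,q)$ and $\int_0^1\phi(\gamma^*)^2\,dt\le\varepsilon\, d^2(p,q)$. By Arzel\`a--Ascoli and weak lower semicontinuity, along any sequence $\varepsilon\to0$ a subsequence converges uniformly to a curve in $\Gamma(p,q,\Omega)$ with energy at most $d^2(p,q)$, i.e.\ a minimal geodesic. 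No $\varepsilon$-uniform bounds on $\lambda^*$ or $\dot\lambda^*$ are needed.
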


\begin{proof}It suffices to show that $\gamma^*(t)$ is the solution to the constrained minimization:
$$
\\min \int_0^1 |\dot \gamma(t)|^2dt \quad s.t. \quad \phi(\gamma(t))=\epsilon \lambda^*(t), \quad t\in [0, 1]. 
$$
To do so, we 
let $\delta \in \mathbb{R}$ be a small parameter, and define 
\[
J(\delta) = \left\{ \int_0^1|\dot \gamma^*(t)+\delta \dot a(t)|^2~dt, \quad \phi(\gamma^*(t)+\delta a(t))=\varepsilon\lambda^*(t) \quad \forall t \in [0, 1]\right\},
\]
where $a\in C^1[0, 1]$ is an arbitrary perturbation that satisfies $a(0)=a(1)=0$. 
Observe that 
\begin{align*}
   \frac{d}{d\delta} \int_0^1|\dot{\gamma}^*(t)+\delta\dot{a}(t)|^2~dt & 
    = \int_0^1 2( \dot \gamma^*(t) + \delta \dot{a}(t))\cdot\dot{a}(t)~dt. 
   \end{align*}
Thus, 
\begin{align*}
    \lim_{\delta\to 0} \frac{d}{d\delta} \int_0^1|\dot{\gamma}^*(t)+\delta\dot{a}(t)|^2~dt 
    & = \int_0^1 2\dot{a}(t)\cdot \dot{\gamma}^*(t)~dt\\
    & =-2 \int_0^1 a(t)\cdot \ddot \gamma^*(t)dt.
\end{align*}
This relies on the fact that $a(0)=a(1)=0$.  On the other hand, the constraint 
$\phi(\gamma^*(t)+\delta a(t))=\varepsilon \lambda^*(t)$ for any $\delta$ yields 
$$
\nabla \phi(\gamma^*(t)+\delta a(t))\cdot a(t)=0.
$$
Since $\ddot{\gamma}^* = ((1-\alpha\varepsilon)\lambda^*+\alpha\phi(\gamma^*))\nabla\phi(\gamma^*(t))$, it follows that  
\[\ddot{\gamma}^*(t) \cdot a(t) = 0.
\]
Consequently, we have  
\begin{align*}
    \lim_{\delta\to 0} \frac{d}{d\delta} \int_0^1|\dot{\gamma}^*(t)+\delta\dot{a}(t)|^2~dt & 
       =-2 \int_0^1 a(t)\cdot \ddot \gamma^*(t)dt= 0.
\end{align*}
 Therefore, $\delta=0$ is a critical point for $J(\delta)$.
Now, taking the second derivative, 
\begin{align*}
    \frac{d^2}{d\delta^2} \int_0^1|\dot{\gamma}^*(t)+\delta\dot{a}(t)|^2~dt 
     = \int_0^1 2|\dot{a}(t)|^2 dt \geq 0.
\end{align*}
Therefore, $J(\delta)$ is convex, so $\delta=0$ is a global minimizer of $J(\delta)$.  
Thus, $\gamma^*$ minimizes
$\int_0^1|\dot{\gamma}^*(t)|^2~dt$ under the constraint $\phi(\gamma(t)) = \varepsilon\lambda^*(t)$, and when $\varepsilon$ is small, $\gamma^*$ approximates one of the shortest geodesics from $p$ to $q$. 
\end{proof}

\subsection{Convergence analysis} In this section, we present the main theoretical result of this work,  providing the convergence analysis in continuous-time PDE systems. Let $U^*=(\lambda^*(t), \gamma^*(t))$ be the steady solution, then 
\begin{equation} \label{ee} 
\begin{cases}
        -\varepsilon\lambda^*(t) +\phi(\gamma^*(t)) = 0, \\
        \ddot{\gamma}^*(t)-[(1-\alpha\varepsilon)\lambda^*(t)+\alpha\phi(\gamma^*(t))]\nabla \phi(\gamma^*(t)) = 0.
    \end{cases}
\end{equation} 
It is reasonable to consider the domain in which  $\phi$ is suitably small. More precisely, we make the following assumption. 

{\textbf{Assumption A.} There exists a constant $\nu>0$ and a constant $a>0$ such that  
\[ |\nabla\phi(x)|^2 \geq \nu  \quad \text{and} \quad 2 a \|D^2 \phi(x)\| \leq \nu
\]
for $x\in \Sigma$, where 
$$
\Sigma = \{x~|~ |\phi(x)|\leq a\}. 
$$
}

\begin{theorem} \label{thm5} Consider the PDE system 
\[\begin{cases}
    \lambda'(t,s)  = -\varepsilon\lambda(t,s) +\phi(\gamma(t,s)),\\
    \gamma'(t,s) = \ddot{\gamma}(t,s)-[(1-\alpha\varepsilon)\lambda(t,s)+\alpha\phi(\gamma(t,s))]\nabla \phi(\gamma(t,s)), 
\end{cases}\]
with initial condition $\gamma_0(t)$ satisfying $\phi(\gamma_0(t))=0$
for $t=0$ and $t=1$. Suppose Assumption A holds. Then, for appropriate choices of $\epsilon$ and $\alpha$, satisfying  $1/2< \alpha \epsilon <1$, the system converges exponentially to the equilibrium curve governed by (\ref{ee}).    
\end{theorem}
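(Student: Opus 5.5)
We work with the deviations $\mu(t,s)=\lambda(t,s)-\lambda^*(t)$ and $\eta(t,s)=\gamma(t,s)-\gamma^*(t)$. These satisfy $\eta(0,s)=\eta(1,s)=0$, since the endpoints are held at $p$ and $q$. The plan is to build a Lyapunov functional $E(s)$ in the $L^2(0,1)$ norms of $\mu$, $\eta$, $\dot\eta$, to show $E'\le -cE$ for some $c>0$, and to conclude by Gr\"onwall that $E(s)\le E(0)e^{-cs}$.

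\textbf{Step 1: error equations.} Subtracting (\ref{ee}) from the system gives
\begin{align*}
\mu' &= -\varepsilon\mu + \bigl(\phi(\gamma)-\phi(\gamma^*)\bigr),\\
\eta' &= \ddot\eta - \bigl[(1-\alpha\varepsilon)\lambda+\alpha\phi(\gamma)\bigr]\nabla\phi(\gamma) + \bigl[(1-\alpha\varepsilon)\lambda^*+\alpha\phi(\gamma^*)\bigr]\nabla\phi(\gamma^*).
\end{align*}
By the mean value theorem, $\phi(\gamma)-\phi(\gamma^*)=g\cdot\eta$ and $\nabla\phi(\gamma)-\nabla\phi(\gamma^*)=H\eta$, where $g$ and $H$ are averaged values of $\nabla\phi$ and $D^2\phi$ along the segment joining $\gamma^*$ and $\gamma$. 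We assume this segment stays in $\Sigma$; the justification is in Step 3. Assumption A then gives $|g|^2\gtrsim\nu$ and $\|H\|\le \nu/(2a)$, up to controlling the difference between $g$ and $\nabla\phi(\gamma)$. Using $\phi(\gamma^*)=\varepsilon\lambda^*$, the $\eta$-equation becomes
$$\eta'=\ddot\eta-\bigl[(1-\alpha\varepsilon)\mu+\alpha\, g\cdot\eta\bigr]\nabla\phi(\gamma)-\lambda^*H\eta .$$

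\textbf{Step 2: energy estimate.} Take $E=\tfrac12\|\eta\|^2+\tfrac{\beta}{2}\|\mu\|^2$, with the weight $\beta$ chosen to cancel the cross terms. Differentiating in $s$ and integrating by parts, using $\eta=0$ at $t=0,1$:
\begin{align*}
\tfrac{d}{ds}\tfrac12\|\eta\|^2 &= -\|\dot\eta\|^2-\alpha\int (g\cdot\eta)(\nabla\phi(\gamma)\cdot\eta)-(1-\alpha\varepsilon)\int \mu\,\nabla\phi(\gamma)\cdot\eta-\int\lambda^*\eta\cdot H\eta,\\
\tfrac{d}{ds}\tfrac{\beta}2\|\mu\|^2 &= -\beta\varepsilon\|\mu\|^2+\beta\int\mu\, g\cdot\eta .
\end{align*}
\begin{itemize}
\item Choosing $\beta=1-\alpha\varepsilon>0$ makes the cross terms cancel up to a remainder of size $\int|\mu|\,\|H\|\,|\eta|^2$; this is where $\alpha\varepsilon<1$ is used.
\item The term $\alpha\int(g\cdot\eta)^2$ is a damping term, but it controls only the normal component of $\eta$.
\item The tangential component is controlled by $\|\dot\eta\|^2\ge\pi^2\|\eta\|^2$ (Poincar\'e, from the Dirichlet conditions).
\item The indefinite term $\lambda^*\eta\cdot H\eta$ is bounded using $|\lambda^*|=|\phi(\gamma^*)|/\varepsilon\le a/\varepsilon$ together with $\|H\|\le\nu/(2a)$. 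This gives a bound by $\frac{\nu}{2\varepsilon}\|\eta\|^2$, which needs to be absorbed by the damping $\alpha\nu$ plus the Poincar\'e gain.
\end{itemize}
The condition $\alpha\varepsilon>1/2$, equivalently $\alpha\nu>\nu/(2\varepsilon)$, appears exactly as the requirement for this absorption, because $2a\|D^2\phi\|\le\nu$. The result is $E'\le -cE$ with $c=\min\{\beta\varepsilon,\ \pi^2+\alpha\nu-\nu/(2\varepsilon)\}>0$ (after suitable bookkeeping), which gives exponential decay.

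\textbf{Step 3 (main obstacle).} The estimates above need $\gamma(t,s)$ and the segments to $\gamma^*$ to remain in $\Sigma$ for all $s$, and they need the higher-order remainder $\int|\mu|\|H\||\eta|^2$ and the mismatch $g\neq\nabla\phi(\gamma)$ to be controlled. I would handle this with a continuity argument. For initial data close enough to the equilibrium, which $\phi(\gamma_0)=0$ at the endpoints and smallness of $\varepsilon\lambda$ make plausible, $E$ is small. Since $E$ is nonincreasing while the trajectory stays in $\Sigma$, and we need $L^\infty$ control, I would additionally track $\|\dot\eta\|^2$ or use a maximum-principle bound on $\phi(\gamma)$, and use the one-dimensional Sobolev embedding $\|\eta\|_\infty\le\|\dot\eta\|$ to keep the trajectory in $\Sigma$. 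Closing this invariance (a bootstrap) is the delicate point. If it fails globally, I would state convergence for initial data in a neighborhood of the equilibrium.
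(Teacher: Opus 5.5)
Your Step 2 does not close, and the problem is not bookkeeping. You correctly observe that the damping term $\alpha\int (g\cdot\eta)(\nabla\phi(\gamma)\cdot\eta)$ only controls the component of $\eta$ along $\nabla\phi$. Your rate $c=\min\{\beta\varepsilon,\ \pi^2+\alpha\nu-\nu/(2\varepsilon)\}$, however, spends $\alpha\nu$ to absorb $\frac{\nu}{2\varepsilon}\|\eta\|^2$ for all of $\eta$. For $\eta$ tangent to $\Omega$ the only gain is $\|\dot\eta\|^2\ge\pi^2\|\eta\|^2$, so you would need something like $\sup_t|\lambda^*(t)|\,\sup_\Sigma\|D^2\phi\|<\pi^2$. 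This does not follow from $\alpha\varepsilon>1/2$, and your own bound $\nu/(2\varepsilon)$ exceeds $\pi^2$ once $\varepsilon<\nu/(2\pi^2)$.

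In fact, to leading order the tangential part of your quadratic form, $\int_0^1 (|\dot\eta|^2+\lambda^*\,\eta\cdot D^2\phi(\gamma^*)\eta)\,dt$, is the second variation of the energy along $\gamma^*$. Its positivity is a no-conjugate-point condition on $\gamma^*$, not a consequence of Assumption A. Here is a concrete example. Take $\phi(x)=\frac12(|x|^2-1)$, which satisfies Assumption A, and non-antipodal $p,q$. Let $\gamma^*$ be the (planar, by symmetry) equilibrium near the long great-circle arc from $p$ to $q$, of length $L\in(\pi,2\pi)$. Then $\lambda^*\approx-L^2$, and $\gamma^*\in\Sigma$ for small $\varepsilon$. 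For $\eta=f(t)e$ with $e$ normal to the plane of the arc, $\nabla\phi(\gamma^*)\cdot\eta=0$. So the linearized $\mu$-equation is just $\mu'=-\varepsilon\mu$, the $e$-component decouples from $\mu$, and $f'=\ddot f-\lambda^* f\approx \ddot f+L^2 f$. Its first Dirichlet mode grows at a rate close to $L^2-\pi^2>0$, yet your $c$ is positive whenever $\alpha\varepsilon>1/2$. For antipodal $p,q$ one has $L=\pi$ and this mode is neutral, reflecting the circle of minimizing semicircles, so no estimate $E'\le -cE$ about a fixed $\gamma^*$ can hold there either. You need an additional hypothesis on $\gamma^*$, such as the bound above or strict positivity of that index form.

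For comparison, the paper does not subtract an equilibrium. It uses the residual functional $J=\|\lambda'\|^2+\frac{1}{1-\alpha\varepsilon}\|\gamma'\|^2$, which has the same weighting as your $\beta=1-\alpha\varepsilon$. Its evolution is the exact linearization about the current state, so there is no mismatch $g\neq\nabla\phi(\gamma)$, no cubic remainder, and no need to fix $\gamma^*$ in advance. It bounds the coefficient $(1-\alpha\varepsilon)\lambda+\alpha\phi(\gamma)$ by $a/\varepsilon$ plus an $e^{-\varepsilon s}$ term, using Duhamel's formula for $\lambda$. However, it reaches $\alpha\varepsilon>1/2$ only by writing the damping as $-\frac{2\alpha}{1-\alpha\varepsilon}\int|\nabla\phi(\gamma)|^2|\gamma'|^2$ instead of $-\frac{2\alpha}{1-\alpha\varepsilon}\int(\nabla\phi(\gamma)\cdot\gamma')^2$. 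That is exactly the step you flagged as invalid, and the out-of-plane mode above makes its $J$ grow as well. So agreement with the paper's condition is not evidence that your estimate closes.

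Separately, your Step 3 remains open; the paper likewise just assumes $|\phi(\gamma)|\le a$ along the flow. Also, the hypothesis $\phi(\gamma_0)=0$ at $t=0,1$ gives no smallness of $E(0)$. The straight-line initialization is far from $\gamma^*$, and for distant $p,q$ it is not even contained in $\Sigma$. Retreating to a neighborhood of the equilibrium therefore changes the statement.
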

The proof presented below comprises two steps:

{\bf Step 1}: We introduce a Lyapunov function defined for  $U = (\lambda, \gamma)$ as 
$$
J(U):=\int_0^1 (-\varepsilon\lambda + \phi(\gamma(t, s))^2~dt + \mu\int_0^1|\ddot{\gamma}(t,s)-[(1-\alpha\varepsilon)\lambda(t,s)+\alpha\phi(\gamma(t,s))]\nabla \phi(\gamma(t,s))|^2~dt,
$$
where the parameter $\mu$ is to be determined so that $J$ along the PDE dynamics is non-increasing in $s$, at least for large $s$. Thus,  all solutions will lie strictly in the sub-level set defined by  
$$
Z=\{U, \quad J(U)\leq J(U_0)<\infty\}. 
$$
{\bf Step 2:} We fix $\mu$ and show that there is a $\beta>0$ such that the decay rate of $J$ is 
$$
\frac{d}{ds}J \leq -\beta J. 
$$
After these two essential steps,  we will be able to prove the time-asymptotic convergence result expressed as 
$$
J(U) \sim  O(1) e^{-\beta s} 
$$
as $s$ becomes large. 

\begin{proof} When evaluating $J$ along the PDE system, it can be rewritten as 
\begin{align*}
    J & = \int_0^1\lambda'^2~dt+\mu\int_0^1|\gamma'|^2~dt, 
\end{align*}
with $\mu>0$ to be determined. Taking the time-derivative of $J$, we have 
\begin{align*}
     \frac{d}{ds} J & = 2 \int_0^1\lambda'\lambda''~dt + 2 \mu\int_0^1 \gamma'\cdot  \gamma''~dt\\
    & = 2 \int_0^1\lambda'(-\varepsilon\lambda'+\nabla\phi(\gamma)\cdot\gamma')~dt \\ 
    &+ 2 \mu\int_0^1\gamma' \cdot (\ddot{\gamma}'-[(1-\alpha\varepsilon)\lambda'+\alpha \nabla\phi(\gamma)\cdot\gamma']\nabla\phi(\gamma)-[(1-\alpha\varepsilon)\lambda +\alpha\phi(\gamma)]D^2\phi(\gamma)\gamma')~dt\\
    & = 2 \int_0^1-\varepsilon\lambda'^2+\lambda'\nabla\phi(\gamma)\cdot\gamma'~dt \\ 
    & \qquad + 2 \mu\int_0^1\gamma' \cdot \ddot{\gamma}'-[(1-\alpha\varepsilon)\lambda'+\alpha \nabla\phi(\gamma)\cdot\gamma']\nabla\phi(\gamma)\cdot \gamma' \\
    & \qquad -[(1-\alpha\varepsilon)\lambda +\alpha\phi(\gamma)]\gamma'\cdot D^2\phi(\gamma)\gamma'~dt\\
    & = 2 \int_0^1-\varepsilon\lambda'^2+\lambda'\nabla\phi(\gamma)\cdot\gamma'~dt + 2 \mu\int_0^1\gamma' \cdot \ddot{\gamma}'-\alpha |\nabla\phi(\gamma)|^2|\gamma'|^2~dt \\ 
    & \qquad + 2 \mu\int_0^1 -(1-\alpha\varepsilon)\lambda'\nabla\phi(\gamma)\cdot \gamma'  -[(1-\alpha\varepsilon)\lambda +\alpha\phi(\gamma)]\gamma'\cdot D^2\phi(\gamma)\gamma'~dt\\
    & = 2 \int_0^1-\varepsilon\lambda'^2-\mu\alpha |\nabla\phi(\gamma)|^2|\gamma'|^2~dt - 2 \mu\int_0^1|\dot{\gamma}'|~dt+ 2\mu\gamma'\cdot\dot{\gamma}'|_0^1\\ 
    & \qquad + 2 \int_0^1 (1-\mu(1-\alpha\varepsilon))\lambda'\nabla\phi(\gamma)\cdot \gamma'-\mu(\lambda+\alpha\lambda')\gamma'\cdot D^2\phi(\gamma)\gamma'~dt.
\end{align*}
Notice that $\gamma(0, s) = p$ and $\gamma(1, s) = q$ for all $s$, so $\gamma'$ is zero at $t = 0$ and $t = 1$. Therefore, $\mu\gamma' \cdot \dot{\gamma}'\big{|}_0^1 = 0$. 
Thus,
\begin{align*}
    \frac{d}{ds} J & = 2 \int_0^1-\varepsilon\lambda'^2-\mu\alpha |\nabla\phi(\gamma)|^2|\gamma'|^2~dt - 2 \mu\int_0^1|\dot{\gamma}'|~dt\\ 
    &+ 2 \int_0^1 (1-\mu(1-\alpha\varepsilon))\lambda'\nabla\phi(\gamma)\cdot \gamma'-\mu(\lambda+\alpha\lambda')\gamma' \cdot D^2\phi(\gamma)\gamma'~dt.
\end{align*} 
Therefore, if $0<\alpha\varepsilon < 1$, then we can choose $\mu = \frac{1}{(1-\alpha\varepsilon)}$. Then, 
\begin{align*}
    \frac{d}{ds} J & = 2 \int_0^1-\varepsilon\lambda'^2-\mu\alpha |\nabla\phi(\gamma)|^2|\gamma'|^2~dt - 2 \mu\int_0^1|\dot{\gamma}'|+(\lambda+\alpha\lambda')\gamma' \cdot D^2\phi(\gamma)\gamma'~dt \\
    & \leq -2 \varepsilon \int_0^1\lambda'^2dt -2\mu\alpha  \int_0^1 |\nabla \phi|^2 |\gamma'|^2~dt + 2\mu \int_0^1 
    |\lambda +\alpha \lambda'| \|D^2\phi(\gamma)\||\gamma'|^2 dt. 
\end{align*}
We now need to bound the last term. Observe that 
\begin{align*}
    \lambda(t,s) &= \lambda(t,0)e^{-\varepsilon s} + \int_0^s\phi(\gamma)e^{\varepsilon \tau}~d\tau \cdot e^{-\varepsilon s}.
 \end{align*} 
 This gives 
 \begin{align*} 
   |\lambda(t,s)|  
    & \leq |\lambda(t,0)|e^{-\varepsilon s} +\max_{[0, s]} |\phi(\gamma)|e^{-\varepsilon s} \frac{1}{\varepsilon}(e^{\varepsilon s} - 1)\\
    & \leq |\lambda(t,0)|e^{-\varepsilon s} +\frac{ \max_{[0, s]} |\phi(\gamma)|}{\varepsilon}. 
\end{align*}
Therefore, using $\epsilon \alpha <1$,  
\begin{align*}
    |\lambda + \alpha \lambda'| &= |(1-\alpha\varepsilon)\lambda +\alpha\phi(\gamma)|\\
    &\leq (1-\alpha\varepsilon) |\lambda(t,0)|e^{-\varepsilon s}+ \frac{1-\alpha\varepsilon}{\varepsilon} \max_{[0, s]} |\phi(\gamma)|+\alpha|\phi(\gamma)|\\
    & = (1-\alpha\varepsilon) |\lambda(t,0)|e^{-\varepsilon s} +\frac{\max_{[0, s]} |\phi(\gamma)|}{\varepsilon}.
\end{align*}
Hence, we have 
\begin{align*}
    |\lambda +\alpha \lambda'|\|D^2 \phi(\gamma)\|  & \leq (1-\alpha\varepsilon) |\lambda(t,0)|e^{-\varepsilon s} \|D^2 \phi(\gamma)\| + \frac{\max_{[0, s]} |\phi(\gamma)|}{\varepsilon}   
    \|D^2 \phi(\gamma)\| \\
    & \leq (1-\alpha\varepsilon) |\lambda(t,0)|e^{-\varepsilon s} \|D^2 \phi(\gamma)\| + \frac{a}{\varepsilon} \|D^2 \phi(\gamma)\|.
\end{align*} 
Thus, the last term can be bounded
\begin{align*}
    2\mu\int_0^1 |\lambda + \alpha \lambda'|\|D^2 \phi(\gamma) \| |\gamma'|^2 ~dt & \leq  2\mu  (1-\alpha\varepsilon) \max_t |\lambda(t,0)|e^{-\varepsilon s} \max_t \|D^2 \phi(\gamma)||\int_0^1 |\gamma'|^2 dt \\
    &  + \frac{2a\mu}{\varepsilon} \max_t \|D^2 \phi(\gamma)||  \int_0^1  |\gamma'|^2~dt\\
    & \leq \left( \delta +  \frac{2a \mu}{\varepsilon} \max_t \|D^2 \phi(\gamma)||   \right)  \int_0^1 |\gamma'|^2 dt,
\end{align*}
where we have used the fact that $|O(1)e^{-\varepsilon s}| \leq \delta$ for $s>S$ when $S$ is large enough. Thus,
\begin{align*}
    \frac{d}{ds} J & 
    \leq  -2 \varepsilon \int_0^1\lambda'^2dt -2\mu\alpha  \int_0^1 |\nabla \phi(\gamma)|^2 |\gamma'|^2~dt + 2\mu \int_0^1 
    |\lambda +\alpha \lambda'| |D^2\phi(\gamma)||\gamma'|^2 dt \\
    & \leq  -2\varepsilon \int_0^1\lambda'^2 
    - (  2 \mu \alpha \nu - \delta - \frac{\mu \nu}{\varepsilon} )  
    \int_0^1 |\gamma'|^2~dt. 
\end{align*}
Note that we can choose $\varepsilon$ and $\alpha$ so that 
$\varepsilon \alpha >1/2$, and take $S$ suitably large  so that 
$$
\delta \leq  \delta_0:= 
(\alpha \varepsilon -1/2)\frac{\mu \nu}{\varepsilon}=\frac{\nu(\alpha \varepsilon -1/2)}{\varepsilon (1- \alpha \varepsilon)}.
$$
Hence 
$$
J' \leq -\beta J,  
$$
where $\beta = \min\{2\varepsilon, \delta_0 \}$. Thus,
$$
J(U(s))\leq J(U(S))e^{-\beta (s-S)}, \quad \forall s>S.
$$
\end{proof} 
\begin{remark}
Assumption A may appear unconventional, it provides  a reasonable constraint that keeps $\gamma$ close to the underlying surface. 
{ For instance with $\phi(x)=\frac{1}{2}(|x|^2-1)$, we have 
$$
\nabla \phi(x)=x, \quad D^2\phi(x)=I.
$$
To satisfy assumption A  
in the domain $\Sigma=\{x| |\phi(x)|\leq a\}=\{x|\quad 1-2a \leq |x|^2 \leq 1+2a \}$, it suffices to set 
$\nu=1-2a$ with $a$
satisfying $2a \leq \nu$. This implies $a \leq 1/4$ and $\nu \geq 1/2.$ 
We can also consider $\phi(x)$ as a signed distance function, which,  in this case, takes  the form:  
$$
\phi(x)=1- |x|.
$$
In the domain $\Sigma=\{x| \; |\phi(x)|\leq a\}=\{x|\quad 1-a\leq |x|\leq 1+a\}$, both 
$$
\nabla \phi(x)=-\frac{x}{|x|}, \quad \text{and} \quad  D^2\phi(x)=\frac{1}{|x|^3}\left[
x\otimes x -|x|^2I
\right] 
$$
 satisfy Assumption A with $\nu=1$, provided   $\frac{2a}{1-a}\leq 1$, which requires $0< a\leq \frac{1}{3}$. Note that for the signed distance function, the condition $|\nabla \phi|=1$ always holds. This property also explains why the signed distance function is highly effective in various  applications. 
 }
\end{remark}

\subsection{Discrete System}
  Note that the update described in (\ref{eqs:PDHG_update}) can be reformulated as 
   \begin{align*}
        \frac{\lambda_{k+1}-\lambda_k}{\tau_{\lambda}} & = -\frac{\varepsilon}{1+\varepsilon\tau_{\lambda}}\lambda_k + \frac{1}{1+\varepsilon\tau_{\lambda}}\phi(\gamma_k)\\
        \frac{\gamma_{k+1}-\gamma_k}{\tau_{\gamma}} & = \ddot{\gamma}_k(t)-\nabla\phi(\gamma_k)\cdot\left[ \left(1-\frac{1+w}{1+\varepsilon \tau_{\lambda}}\varepsilon\tau_{\lambda}\right)\lambda_k+\frac{(1+w)\tau_{\lambda}}{1+\varepsilon\tau_{\lambda}}\phi(\gamma_k) \right],
    \end{align*}   
for which the high-resolution PDE system (\ref{eq:equi0}) serves as a more accurate time-continuous counterpart. By some standard  truncation error analysis we can conclude the following:  
\begin{theorem} \label{thm7}
 Denote $U_k(t) =(\lambda_k(t), \gamma_k(t))$  as the semi-discrete update  defined in (\ref{eqs:PDHG_update}),  and $U(s, t)=(\lambda(s, t),\gamma(s, t))$  as the solution provided in Theorem \ref{thm5} with initial data $U(0, t)=U_0(t)$. Then for $\tau=\tau_\gamma =\frac{\tau_\lambda}{1+ \varepsilon \tau_{\lambda}}$, we have:
$$
\lim_{\tau \to 0} \sup \max_{0\leq k\leq S/\tau}\| U_k(t)-U(k\tau, t)\|=0
$$
for any fixed $S>0$. 
\end{theorem}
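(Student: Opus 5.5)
The plan is a standard consistency-plus-stability (Lax-type) argument, comparing the semi-discrete iteration (\ref{eqs:PDHG_update}) with the PDE system (\ref{eq:equi0}) on the finite window $[0,S]$.

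\textbf{Step 1: rewrite the iteration.} I would rewrite the iteration exactly as in the reformulation preceding the theorem:
\begin{align*}
\lambda_{k+1} &= \lambda_k + \tau F(\lambda_k,\gamma_k),\\
\gamma_{k+1} &= \gamma_k + \tau G_\tau(\lambda_k,\gamma_k).
\end{align*}
Here $\tau=\tau_\gamma=\tau_\lambda/(1+\varepsilon\tau_\lambda)$ and $F(\lambda,\gamma)=-\varepsilon\lambda+\phi(\gamma)$. The field $G_\tau$ is
\[
G_\tau(\lambda,\gamma)=\ddot\gamma-\nabla\phi(\gamma)\bigl[(1-\alpha_\tau\varepsilon)\lambda+\alpha_\tau\phi(\gamma)\bigr],
\qquad \alpha_\tau=\frac{(1+\omega)\tau_\lambda}{1+\varepsilon\tau_\lambda}.
\]
With $\alpha=(1+\omega)\tau$ held as in the derivation of (\ref{eq:equi0}), $G_\tau$ coincides with the right-hand side $G$ of the $\gamma$-equation, up to an $O(\tau)$ discrepancy in the coefficients that is uniform on bounded sets. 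So the scheme is the forward Euler discretization in $s$ of $U'=\mathcal{F}(U)$ with $\mathcal{F}=(F,G)$, plus an $O(\tau)$ perturbation of the vector field.

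\textbf{Step 2: consistency.} Let $U$ be the solution given by the well-posedness part of the PDE-system theorem and Theorem \ref{thm5}. I will assume enough regularity that $\partial_s^2 U$ is bounded on $[0,S]$ in the norm used, e.g. $\gamma\in C^1([0,S];C^2)\cap C^2([0,S];C^0)$, which holds for smooth compatible $U_0$ with $\phi\in C^3$. Then Taylor's theorem gives
\[
U((k+1)\tau)=U(k\tau)+\tau\mathcal{F}_\tau(U(k\tau))+\tau R_k,\qquad \sup_k\|R_k\|\le C_S\tau.
\]

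\textbf{Step 3: error recursion and the main obstacle.} Set $E_k=U_k-U(k\tau)$ and subtract:
\[
E_{k+1}=E_k+\tau\bigl(\mathcal{F}_\tau(U_k)-\mathcal{F}_\tau(U(k\tau))\bigr)-\tau R_k .
\]
The zeroth-order parts are Lipschitz on bounded sets, because $\nabla\phi$ is Lipschitz and $\phi$ is $C^1$; this is the hypothesis of the well-posedness theorem. I would localize to a tube $\{\|E\|\le 1\}$ around the exact trajectory to obtain a uniform Lipschitz constant $L$, and then close the argument by a bootstrap.

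The genuine obstacle is the term $\ddot\gamma_k-\ddot\gamma(k\tau)$, which is unbounded. Explicit Euler for the heat part is stable only if the spatial discretization obeys a CFL condition, or, in the semi-discrete setting, if the operator $I+\tau\partial_t^2$ is handled in a suitable norm. I would deal with this by splitting the $\gamma$-error update as
\[
e_{k+1}=(I+\tau\Delta)e_k+\tau\,(\text{Lipschitz terms}),
\qquad \Delta=\partial_t^2 \text{ with zero Dirichlet data},
\]
where the endpoint errors vanish because $\gamma_k(0)=p$ and $\gamma_k(1)=q$. In the fully discrete implementation with grid size $h$ and $\tau\le h^2/2$, $I+\tau\Delta_h$ is a contraction in the maximum norm (and in $L^2$). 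If the stated theorem is meant in the semi-discrete sense, I would instead read $\ddot\gamma_{k}$ implicitly, as in (\ref{eqs:PDHG_update+}), where $(I-\tau\Delta)^{-1}$ is a contraction. Either way this gives $\|(I\pm\tau\Delta)^{\pm1}\|\le 1$.

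\textbf{Step 4: discrete Gr\"onwall and bootstrap.} With the contraction in hand, $\|E_{k+1}\|\le(1+L\tau)\|E_k\|+C_S\tau^2$ and $E_0=0$. The discrete Gr\"onwall inequality then gives
\[
\max_{k\le S/\tau}\|E_k\|\le \frac{C_S}{L}\bigl(e^{LS}-1\bigr)\tau .
\]
For small $\tau$ this bound is below $1$, which justifies the localization a posteriori, and letting $\tau\to0$ yields the claim.
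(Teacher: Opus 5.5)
Your consistency--stability--discrete Gr\"onwall argument is exactly the ``standard truncation error analysis'' that the paper invokes for this theorem, and the paper supplies no further proof, so the route is the same. Your Step~3 also correctly isolates the point the paper passes over: for (\ref{eqs:PDHG_update}) read literally as a semi-discrete scheme with explicit $\ddot\gamma_k$, the stability step cannot be closed, because each iteration loses two $t$-derivatives and the factor $1-\tau n^2\pi^2$ multiplying the $n$-th sine mode is unbounded in $n$.

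What you actually prove is therefore the statement for the variant with $\ddot\gamma$ treated implicitly, or for the fully discrete scheme (\ref{eqs:Discrete_scheme}) with grid size $h\to 0$ and $\tau\le h^2/2$. In the fully discrete case you must also add the $O(h^2)$ spatial truncation error to $R_k$, which requires $\gamma\in C^4$ in $t$, so the final bound is $O(\tau+h^2)$ rather than $O(\tau)$.
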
  
The key question is: can we relate  convergence properties of high-resolution PDEs to those of  the corresponding discrete update scheme. The answer is yes, as Theorem \ref{thm7} holds true for $S=\infty$ under the structural assumptions on $\phi$. A 
full proof would involve carefully estimating a discrete Lyapunov functional such as 
$$
J_k=\int_0^1 (\lambda_{k+1}-\lambda_k)^2 dt +\mu \int_0^1 (\gamma_{k+1}-\gamma_k)^2 dt.
$$
Rather than going deeper with technical estimates, we now turn our attention to how the scheme behaves under discretization in the parameter $t$, and how it performs numerically. This also brings up a subtle issue: in cases where the geodesic between states is not unique, what does convergence actually mean -- and how should the scheme behaves.

\subsection{Discussion on the case with non-unique geodesics}
When the geodesic curve between points $p$ and $q$ on the level set surface is not unique, the initialization of $\gamma$ can significantly influence  the algorithm's convergence. In such cases, we introduce randomness into the initialization of $\gamma$. Our numerical experiments indicate that this randomized  initialization improves 
the likelihood of convergence to a valid geodesic curve. Intuitively, this may be because the algorithm tends to closest to the initial curve. Therefore, the initial choice of $\gamma$ plays a key role in determining which geodesic the algorithm ultimately finds in non-unique scenarios.

\section{Numerical Results} 
\subsection{Initialization} There are several ways to initialize the iteration algorithm. One straightforward approach is to set  $\gamma$ as a straight line connecting  $p$ to $q$, with $\lambda$ initialized to $0$. This provides a simple initial guess 
\[\begin{cases}
    \gamma_0(t) = p(1-t)+qt & \text{ such that } \gamma_0(0) = p, \gamma_0(1) = q,\\
    \lambda_0(t) = 0.
\end{cases}\]
When the geodesic curve is not unique, for instance, when $\gamma(0) = p$ and $\gamma(1) = q$ are antipodal points on the surface of a sphere, 
multiple geodesic curves exist between $p$ and $q$. To address this, we introduce randomness into the initialization of the curve $\gamma$ by selecting a random point $r$ on the surface and setting 
\[
\gamma_0(t) = p(1-t) +  \tau_r r(1-t)t + qt,
\]
where $\tau_r$ is a constant. In our numerical experiments, this  initialization method enables the algorithm to converge to a geodesic curve, even when $p$ and $q$ are antipodal. 
Since determining the uniqueness of a geodesic between points $p$ and $q$ can be challenging, this initialization approach is applicable for any pair of points. Figure \ref{fig:initialization} shows the resulting curve after $3500$ iterations,  when $\gamma$ is initialized to be a straight line (left) and when $\gamma$ is initialized with some randomness (right). Both results in Figure \ref{fig:initialization} are generated with the PDHG-inspired algorithm using regularization described by equation \ref{eqs:PDHG_update}. We use $\varepsilon = 0.01$ and $\omega = 1$.

\begin{figure}
    \centering
    \includegraphics[width = 0.45\linewidth]{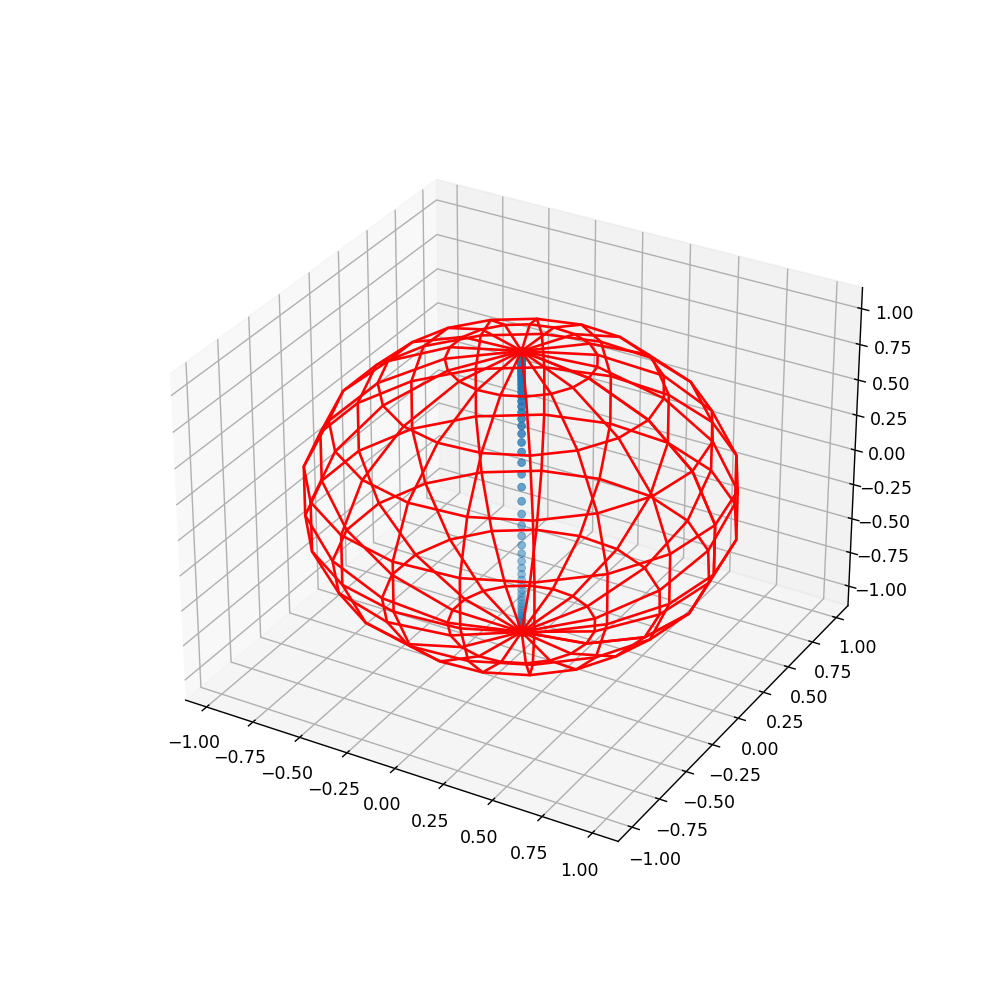}\includegraphics[width = 0.45\linewidth]{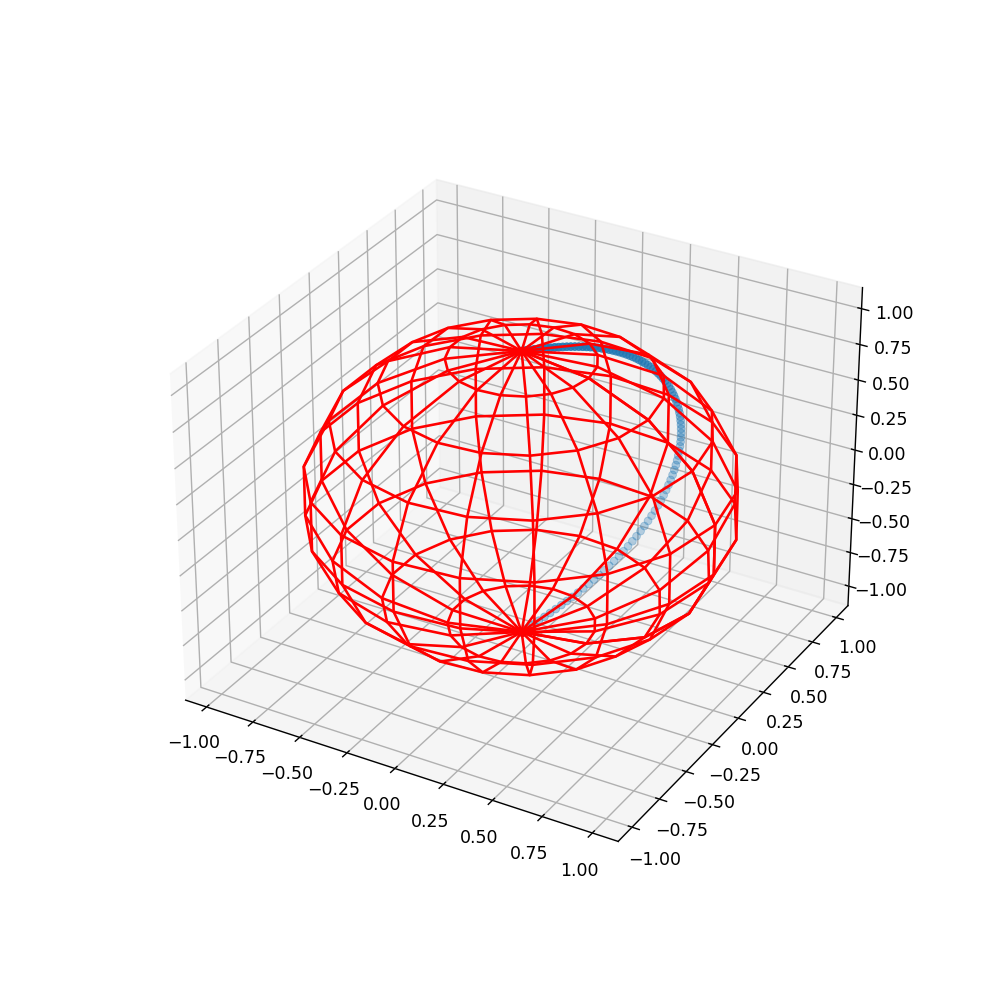}
    \caption{Left: Resulting curve $\gamma$ (in blue) after 3500 iterations of the algorithm to find the path between antipodal points $p$ and $q$, using a straight line as initial guess. Right: Resulting curve after 3500 iterations for the same antipodal points $p$ and $q$,  using random initialization.}
    \label{fig:initialization}
\end{figure}

\subsection{Numerical Solution}
To compute the solution of the marching scheme, we use finite differences. For some integer $m>0$, we define $\Delta t= 1/m$. For $i\in \{1,\ldots, m\}$, let $t_i = i\Delta t$ and for each $i\in \{1,\ldots , m-1\}$, we do 
\begin{equation}\label{eqs:Discrete_scheme}
\begin{cases}
\lambda_{k+1}(t_i) = \frac{1}{1+\varepsilon\tau_{\lambda}}\left(\lambda_k(t_i)+\tau_{\lambda}\phi(\gamma_{k}(t_i))\right), \\
\Tilde{\lambda}_{k+1}(t_i) = \lambda_{k+1}(t_i) + \omega(\lambda_{k+1}(t_i) - \lambda_k(t_i)),\\
    \gamma_{k+1}(t_i) = \gamma_k(t_i)-\tau_{\gamma}\left(-\frac{\gamma_k(t_{i+1})-2\gamma_k(t_i)+\gamma_k(t_{i-1})}{\Delta t^2}+\Tilde{\lambda}_{k+1}(t_i)\nabla\phi(\gamma_k(t_i))\right), \\
    \gamma_k(t_0)=p, \quad \gamma(t_m)=q.
\end{cases}
\end{equation}

To evaluate the results, two error metrics were used. The first, termed ``absolute error",  is defined as $||\gamma| - d|$, where $d$ represents the true geodesic distance. The second metric,  
referred to as ``surface error",  is given by 
\[ 
|\lambda^T\phi(\gamma)|,
\]
which reflects the extent to which the path $\gamma$ deviates from the surface. Surface error is a useful metric when the true geodesic distance can not be calculated. 

Table \ref{tab:error} shows the average absolute error, average relative error, and average computation time for the algorithm \ref{eqs:Discrete_scheme} for 10 pairs of points on the unit sphere. Here, $\tau_{\gamma} = 0.00001$, $\tau_{\lambda} = 0.7$, $\varepsilon = 0.01$, and $\omega = 1$ were used. 

\begin{table}[H]
    \centering
        \begin{tabular}{|p{3.99cm}|ccc|}
        \hline
         & Avg Absolute Error & Avg Relative Error & Avg Comp. Time\\
         \hline
         100 iterations & 0.169  &  9.08 \%  &  0.376 s \\
         \hline
         1000 iterations & 0.107  &  5.719 \%  &  4.175 s \\
         \hline
         2000 iterations  & 0.057  &  2.978 \%  &  8.279 s \\
         \hline
    \end{tabular}
   \caption{Average results for algorithm \ref{eqs:Discrete_scheme} for 10 pairs of points, 100 time points, $\tau_{\gamma} = 0.00001$, $\tau_{\lambda} = 0.7$, $\varepsilon = 0.01$, and $\omega = 1$}. 
    \label{tab:error}
\end{table} 

\subsection{Effect of Regularization Coefficient $\varepsilon$}
The choice of $\epsilon$  significantly affects both numerical stability and the quality of the computed geodesic. We tested a range of values (i.e., $\epsilon=10^{-4},...1$), as shown in Figure \ref{fig:vary_epsilon}. 
Small $\varepsilon$ leads to instability, while large $\varepsilon$ increases absolute error. 
\begin{figure}
    \begin{center}
    \includegraphics[height = 5.5cm]{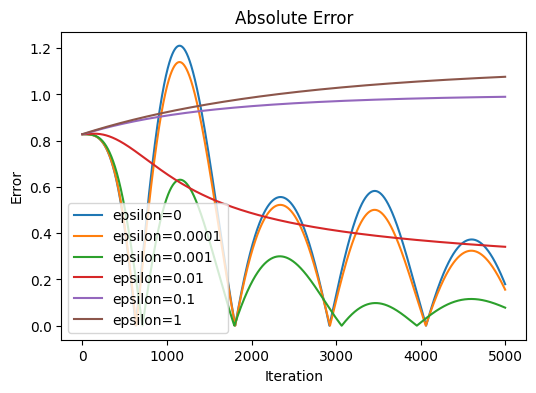}
    \includegraphics[height = 5.5cm]{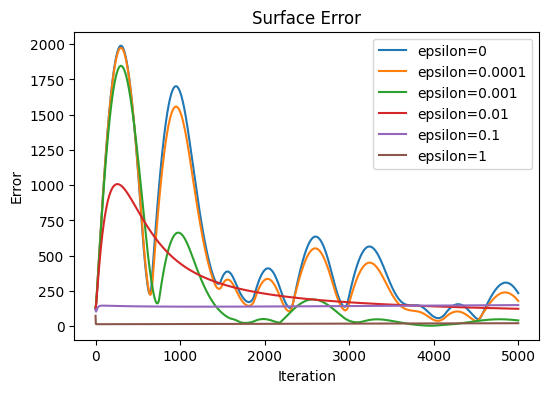}
\end{center}
        \caption{Plot of absolute error (left) and surface error (right) over iterations of the algorithm for finding a path $\gamma$ between antipodal points on a sphere. Each colored curve represents results for a different value of the regularization coefficient $\varepsilon$. } 
    \label{fig:vary_epsilon}
\end{figure}
Regularization is introduced to make sure the ascent step is well-defined.  Step size choices  alone are not enough to prevent $\lambda$ from drifting, and this can be seen in Figures \ref{fig:vary_steps_no_reg} and \ref{fig:vary_steps_reg}. Figure \ref{fig:vary_steps_no_reg} shows that, without regularization, all tested values of $\tau_{\lambda}$ lead to instability, as reflected in both absolute and surface error.
\begin{figure}
    \begin{center}
    \includegraphics[height = 5.5cm]{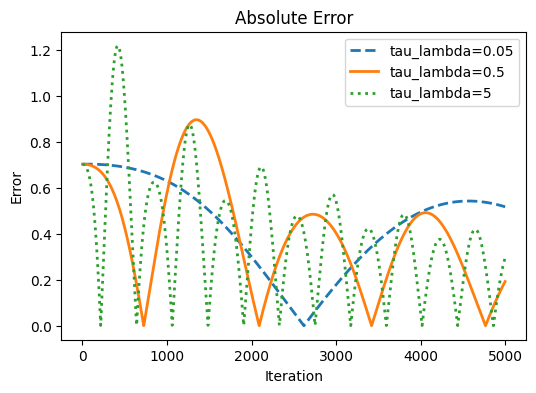}
    \includegraphics[height = 5.5cm]{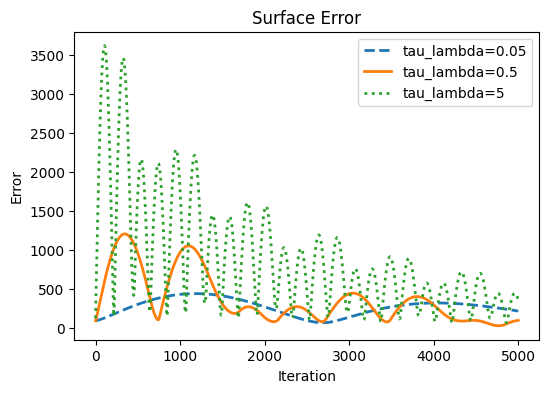}
\end{center}
    \caption{No regularization. Absolute error (left) and surface error (right) over iterations of the algorithm for finding a path $\gamma$ between antipodal points on the sphere. Different colors indicate different values of $\tau_{\lambda}$.} 
    \label{fig:vary_steps_no_reg}
\end{figure}

Figure \ref{fig:vary_steps_reg} shows absolute error and surface error for various $\tau_{\lambda}$ values with regularization ($\varepsilon = 0.01$). The results demonstrate that, with regularization, an appropriate choice of 
$\tau_{\lambda}$ yields stable and accurate behavior -- unlike the unregularized case in Figure \ref{fig:vary_steps_no_reg}. 
\begin{figure}
    \begin{center}
    \includegraphics[height = 5.5cm]{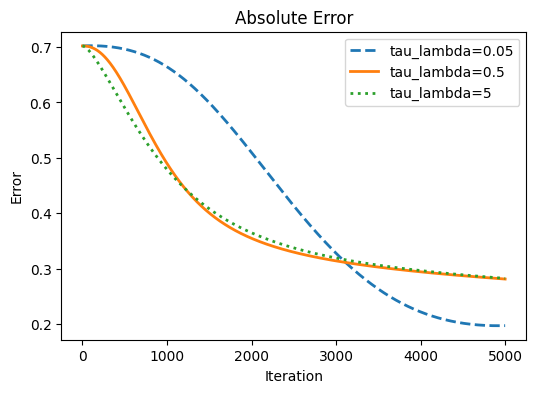}
    \includegraphics[height = 5.5cm]{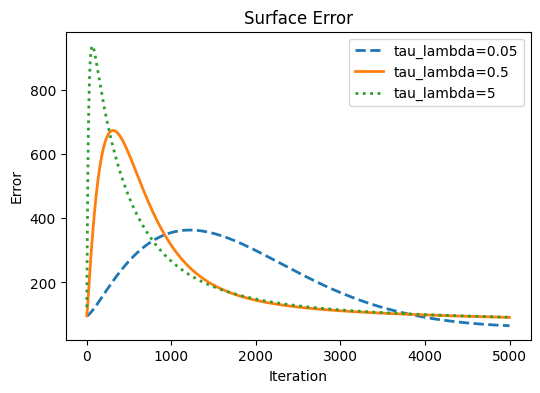}
\end{center}
    \caption{With regularization ($\varepsilon = 0.01$). Absolute error (left) and surface error (right) vs. iterations for computing a path $\gamma$ between antipodal points on the sphere. Different colors indicate different values of $\tau_{\lambda}$. }
    \label{fig:vary_steps_reg}
\end{figure}

\subsection{Examples}\label{sec:examples}
The algorithm \ref{eqs:Discrete_scheme} was used to approximate geodesic distances between points on several surfaces represented in $\R^3$.
Figure \ref{fig:sphere} shows the absolute error and surface error with respect to iterations of the algorithm when approximating the geodesic distance between antipodal points $p = [0,0,1]$ and $q = [0,0,-1]$. The final path obtained after 5000 iterations is plotted on a mesh sphere. For this example, the final absolute error was 0.265 and the final relative error was 8.445\%.

\begin{figure}[h]
    \centering
     \begin{minipage}{0.4\linewidth}
     \includegraphics[height = 5cm]   {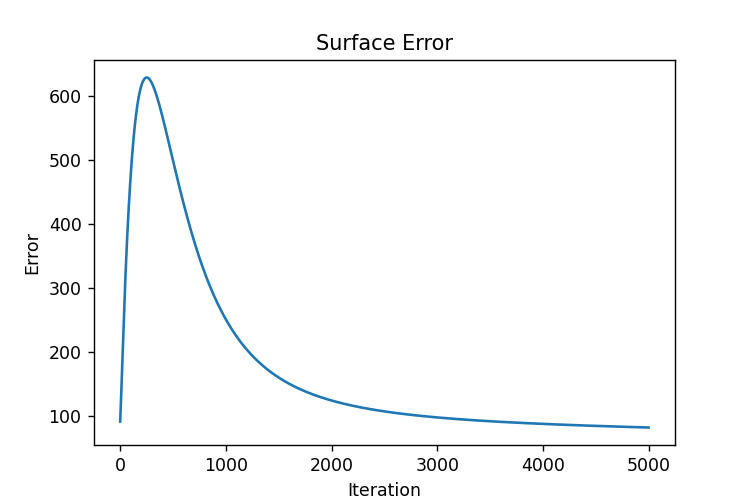}\\
        \includegraphics[height = 5cm]   {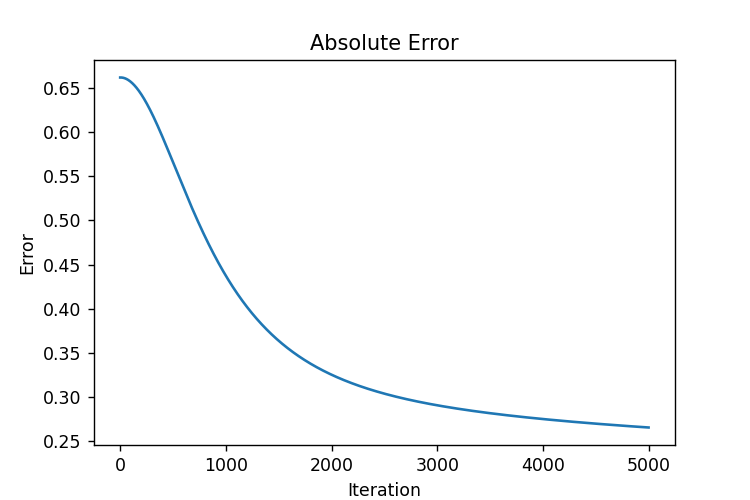} 
    \end{minipage}
    \begin{minipage}{0.55\linewidth}
        \begin{center}
            \includegraphics[height = 8cm]{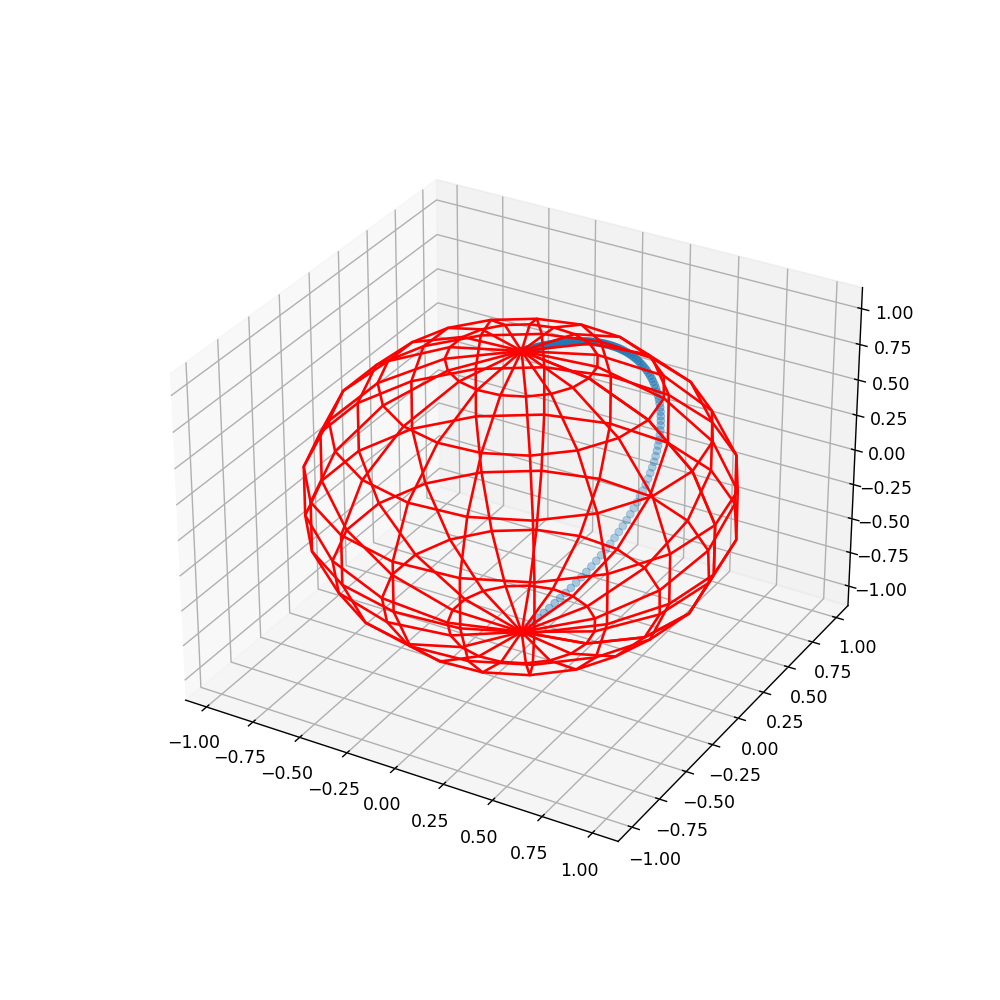} 
    \end{center}
    \end{minipage}
    \caption{The plots on the left shows the absolute error and surface error with respect to number of iterations of the algorithm \ref{eqs:Discrete_scheme}, where $\varepsilon = 0.01$ and $\omega = 1$. On the right, the approximated path $\gamma$ after 5000 iterations is plotted on a mesh unit sphere. For this example, the final absolute error was 0.265 and the final relative error was 8.445\%.}
\label{fig:sphere}
\end{figure}

Figure \ref{fig:torus} shows the surface error against iterations of the algorithm \ref{eqs:Discrete_scheme} when approximating the geodesic distance between two points on a torus. The approximated path after 5000 iterations is plotted on a mesh torus.
    \begin{figure}[h]
        \centering
         \begin{minipage}{0.4\linewidth}
        \includegraphics[height = 5cm]{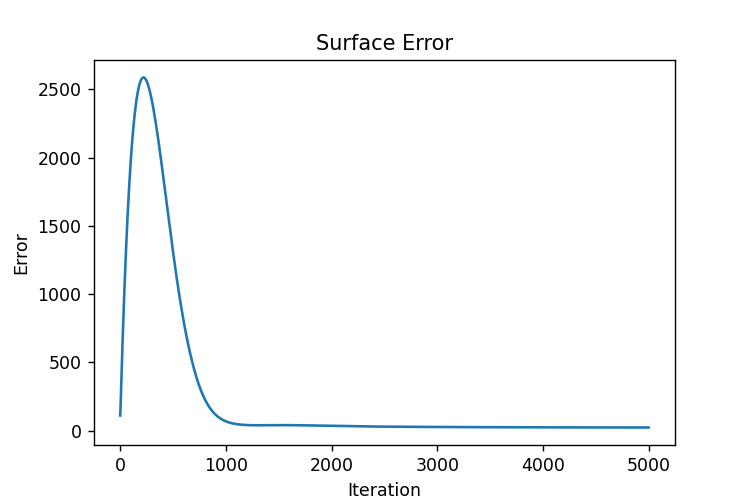} 
    \end{minipage}
    \begin{minipage}{0.55\linewidth}
        \begin{center}
            \includegraphics[height = 8cm]{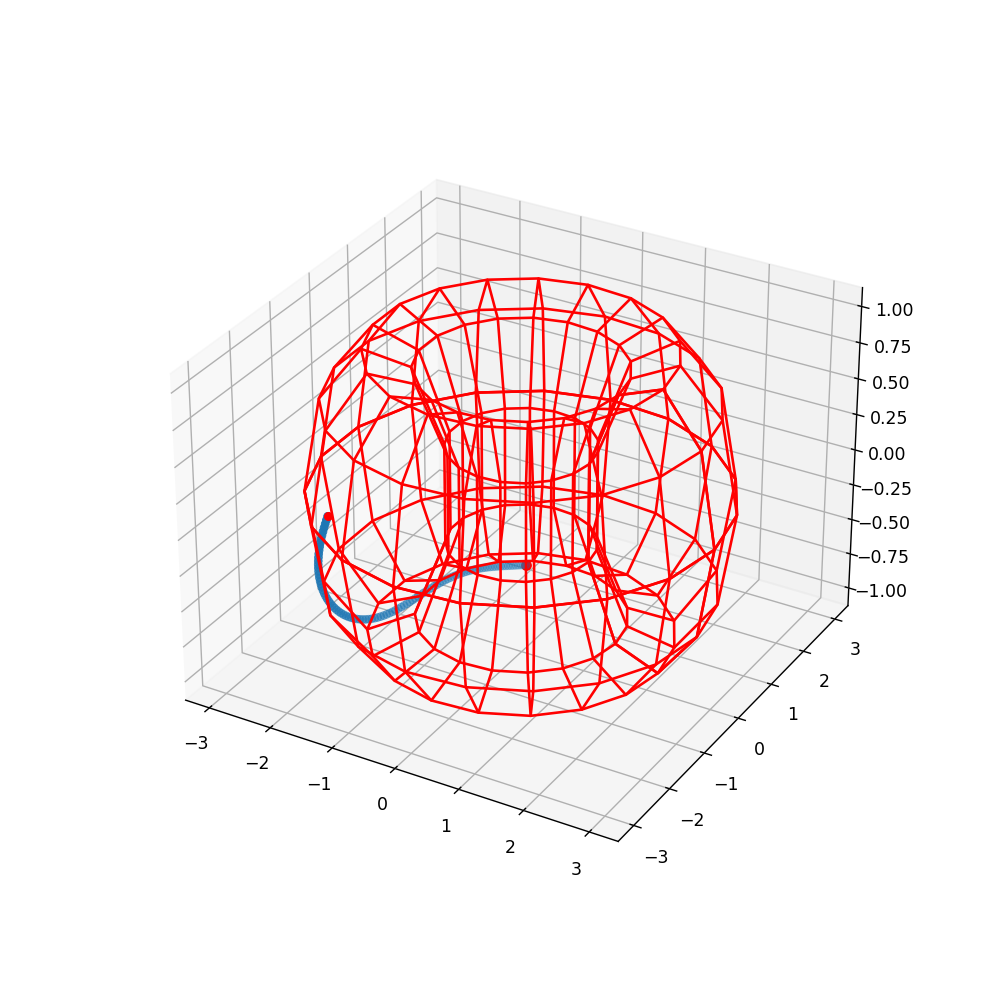} 
    \end{center}
    \end{minipage}
        \caption{The plot on the left shows the surface error with respect to number of iterations of algorithm \ref{eqs:Discrete_scheme} when used to approximate the geodesic distance between two points on a torus. On the right, the approximated path gamma after 5000 iterations is plotted on a mesh torus.}
        \label{fig:torus}
    \end{figure}

\subsection{Variation of the base algorithm}
Note that the update in (\ref{eqs:PDHG_update}) represents a discretization of the continuous PDE system. Alternatively, we can employ different discretizations of the PDE system to create various updates. For instance,  we  update $\lambda$ one more time before updating $\gamma$, resulting in the following
\begin{equation} \label{eqs:PDHG_var1}
\begin{cases}
    \lambda_{k+1}(t) = \frac{1}{1+\varepsilon\tau_{\lambda}}(\lambda_k(t)+\tau_{\lambda}\phi(\gamma_{k}(t))), \\
    \Tilde{\lambda}_{k+1}(t) = \lambda_{k+1}(t)+ \omega(\lambda_{k+1}(t)-\lambda_k(t)),\\
    \Bar{\lambda}_{k+1}(t) = \frac{1}{1+\varepsilon\tau_{\lambda}}(\Tilde{\lambda}_{k+1}(t)+\tau_{\lambda}\phi(\gamma_{k}(t))), \\
    \gamma_{k+1}(t) = \gamma_k(t) - \tau_{\gamma}(-\ddot{\gamma}_k(t)+\Tilde{\lambda}_{k+1}(t)\nabla\phi(\gamma_k(t))).
\end{cases}
\end{equation}

In Figure \ref{fig:PDHG_var1}, the plot on the left shows the absolute error against iterations of the algorithm \ref{eqs:PDHG_var1} when used to approximate distance between antipodal points on the sphere, and the final path $\gamma$ is shown on the right. 

\begin{figure}[h]
    \centering
     \begin{minipage}{0.4\linewidth}
        \includegraphics[height = 5cm]   {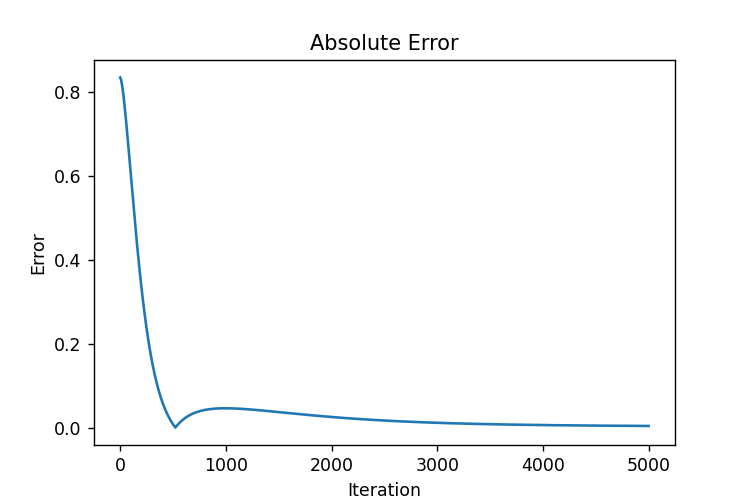} 
    \end{minipage}
    \begin{minipage}{0.55\linewidth}
        \begin{center}
            \includegraphics[height = 7cm]{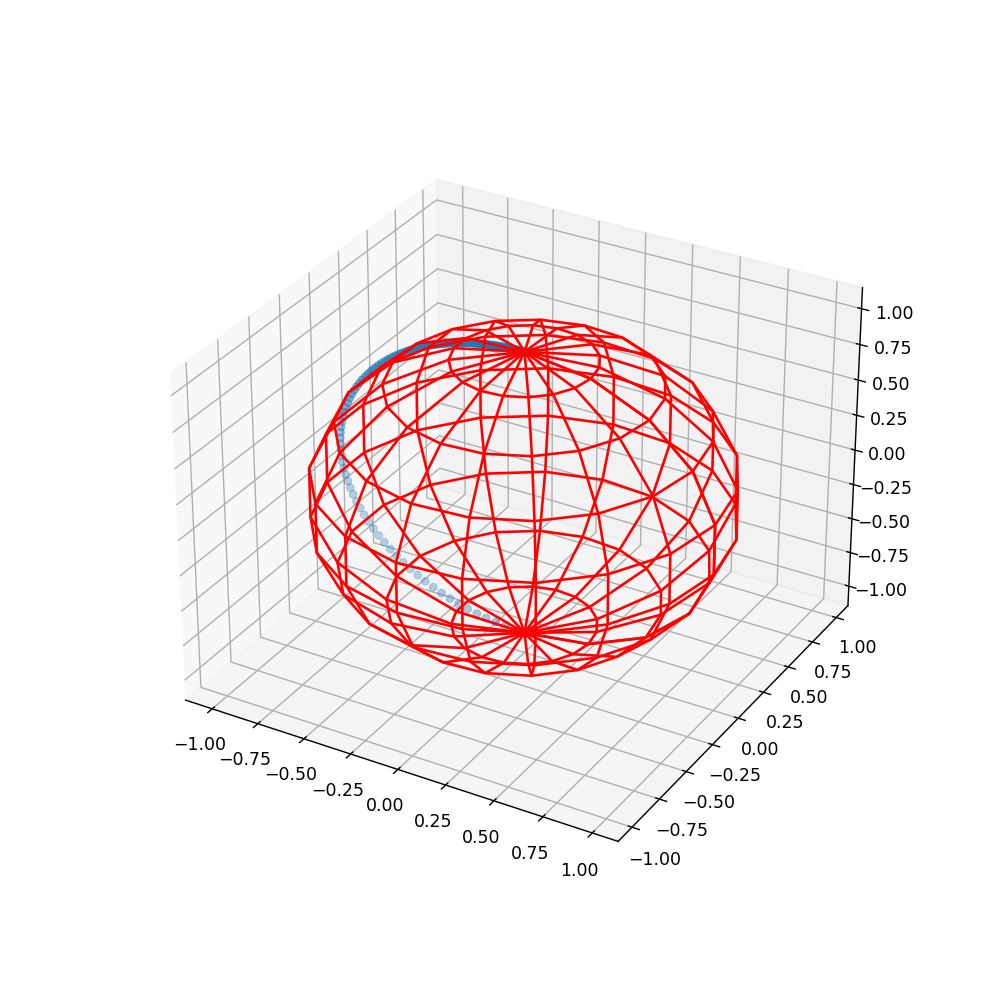} 
    \end{center}
    \end{minipage}
    \caption{The plot on the left shows the absolute error with respect to iterations of the algorithm \ref{eqs:PDHG_var1} described above, where $\varepsilon = 0.00001$ and $\omega = 1000$. The approximated path $\gamma$ after 5000 iterations is plotted on a mesh sphere on the right. For this example, the final absolute error was 0.004 and the final relative error was 0.118\%.}
    \label{fig:PDHG_var1}
\end{figure}

Figure \ref{fig:bunny} shows the surface error over time when a variation \ref{eqs:PDHG_var1} of the base algorithm is used to approximate geodesic distance between points on the surface of the ``Stanford bunny". For this surface, the signed distance function $\phi$ was approximated for a point $x$ by computing
\[\phi(x) = \min\{\|p-x\|~:~ p\in P\subset \Omega\},\]
where $P\subset \Omega$ is a set of points on the surface.
The approximated path after 100 iterations is plotted on a reconstruction of the surface of the Stanford bunny. 
\begin{figure}[h]
    \centering
        \begin{minipage}{0.4\linewidth}
        \includegraphics[height = 4.5cm]{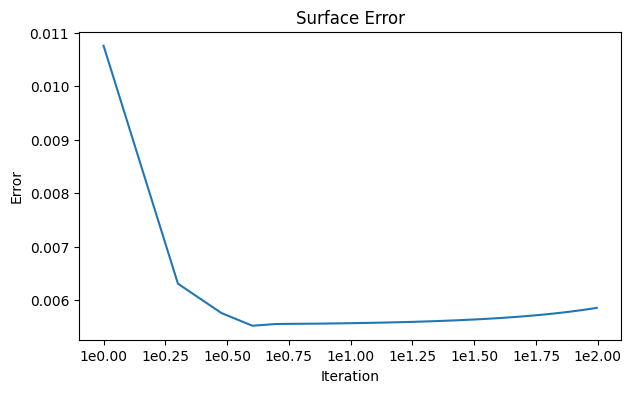} 
    \end{minipage}
    \begin{minipage}{0.55\linewidth}
        \begin{center}
            \includegraphics[height = 7.5cm]{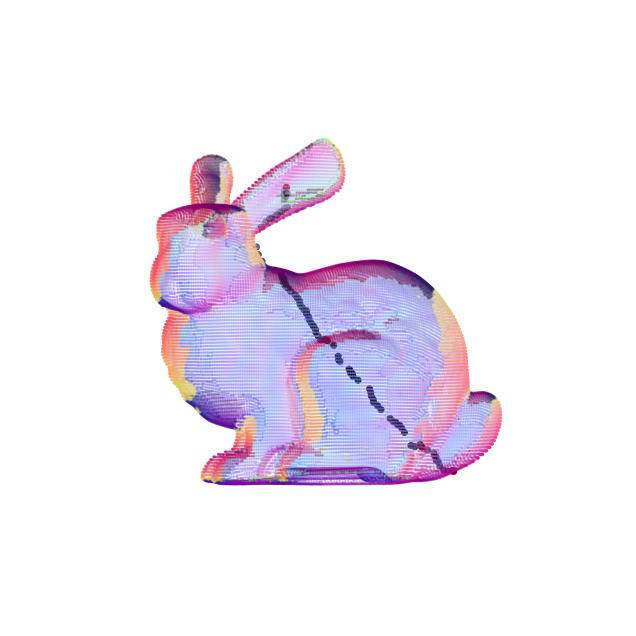} 
    \end{center}
    \end{minipage}
    \caption{The figure shows the surface error and final gamma plot for a path on the surface of the stanford bunny after 100 iterations. These plots were created using the update scheme \ref{eqs:PDHG_var1} with $\varepsilon = 0.0000001$ and $\omega = 100000$. Note that a log scale is used for the $x$-axis for the surface error plot.}
    \label{fig:bunny}
\end{figure}

Another algorithm, also grounded in the PDE formulation, showed promising performance: 
\begin{equation}\label{eqs:PDHG_var2}\begin{cases}
    \lambda_{k+1}(t) = \frac{1}{1+\varepsilon\tau_{\lambda}}(\lambda_k(t)+\tau_{\lambda}\phi(\gamma_{k}(t))), \\
    \Tilde{\lambda}_{k+1}(t) = (1-\alpha \varepsilon)\lambda_{k+1}(t)+ \alpha \phi(\gamma_k(t)),\\
    \gamma_{k+1}(t) = \gamma_k(t) - \tau_{\gamma}(-\ddot{\gamma}_k(t)+\Tilde{\lambda}_{k+1}(t)\nabla\phi(\gamma_k(t))).
\end{cases}\end{equation}
This scheme is a direct Euler discretization of the PDE system, where $\lambda'$ and $\gamma'$ are replaced by finite differences with time step 
$\frac{\tau_\lambda }{1+\varepsilon\tau_{\lambda}}$ and $\tau_\gamma$, respectively.  This is in sharp contrast to  scheme (\ref{eqs:PDHG_update}), where 
$$
 \Tilde{\lambda}_{k+1}(t) = \lambda_{k+1}(t)+ \omega(\lambda_{k+1}(t)-\lambda_k(t)).
$$
In algorithm \ref{eqs:PDHG_var2}, we can rewrite $\Tilde{\lambda}_{k+1}$ in terms of $\omega$ instead of $\alpha$ based on the calculations in Section \ref{sec:PDEsystem}, where $\alpha = (1+\omega)\frac{\tau_{\lambda}}{1+\varepsilon\tau_{\lambda}}$, 
\[\Tilde{\lambda}_{k+1} = \lambda_{k+1} + \frac{\alpha}{\tau_{\lambda}}(\lambda_{k+1}-\lambda_k) = \lambda_{k+1} +\frac{(1+\omega)}{1+\varepsilon\tau_{\lambda}}(\lambda_{k+1}-\lambda_k).\]

In Figure \ref{fig:PDHG_var2}, the plot on the left shows the absolute error against iterations of the algorithm \ref{eqs:PDHG_var2} when used to approximate distance between antipodal points on the sphere, and the final path $\gamma$ is shown on the right. 

\begin{figure}[h]
    \centering
     \begin{minipage}{0.4\linewidth}
        \includegraphics[height = 5cm]   {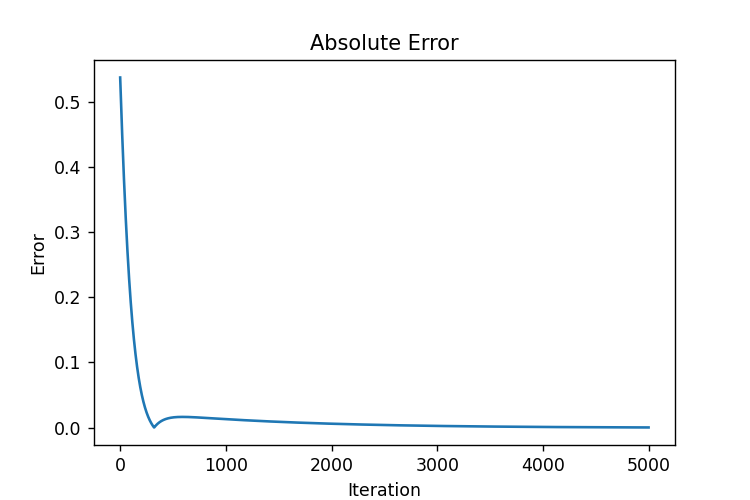} 
    \end{minipage}
    \begin{minipage}{0.55\linewidth}
        \begin{center}
            \includegraphics[height = 7cm]{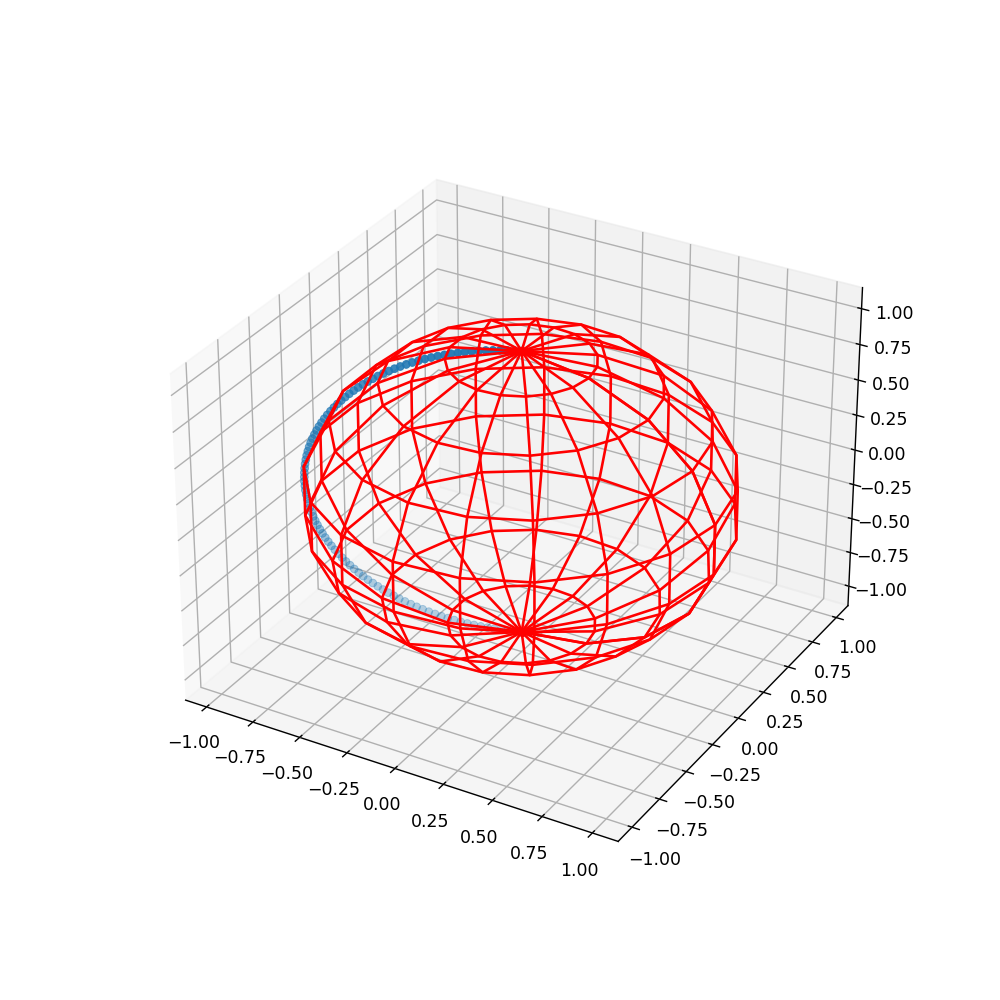} 
    \end{center}
    \end{minipage}
    \caption{The plot on the left shows the absolute error with respect to iterations of the algorithm \ref{eqs:PDHG_var2} described above, where $\varepsilon = 0.00001$ and $\alpha = 1000$. The approximated path $\gamma$ after 5000 iterations is plotted on a mesh sphere on the right. For this example, the final absolute error was 0.001, and the final relative error was 0.005\%.}
    \label{fig:PDHG_var2}
\end{figure}

\begin{table}[]
    \centering
    \begin{tabular}{|c|cc|}
    \hline
        Scheme & Absolute Error & Relative Error\\
        \hline
        PDHG \ref{eqs:PDHG_update} & 0.175 & 5.567\% \\
        \hline
        PDHG var 1 \ref{eqs:PDHG_var1} & 0.004 & 0.118\%  \\
        \hline
        PDHG var 2 \ref{eqs:PDHG_var2} & 0.0001 & 0.005\% \\
        \hline
    \end{tabular}
    \caption{Absolute and relative errors after 5000 iterations of the base algorithm and the two variations of the base algorithm when used to approximate geodesic distance between antipodal points on the unit sphere. For all schemes, $\varepsilon = 0.0001$,  $\omega = 1000$, and $\alpha = 1000$. }
    \label{tab:error_vars}
\end{table}

\section{{Convergence rate for the planar surface case}}\label{sec5}
Consider the case where $\phi(\gamma) = a\cdot \gamma$, with $a\in \R^3$. Then the variational derivatives of the Lagrangian are
\begin{align*}
    \frac{\delta}{\delta\gamma}\mathcal{L}_{\varepsilon}[\gamma, \lambda] = -\ddot{\gamma} + \lambda a, \quad 
    \frac{\delta}{\delta\lambda}\mathcal{L}_{\varepsilon}[\gamma, \lambda]   = a\cdot \gamma - \varepsilon\lambda.
\end{align*}
The PDHG scheme becomes 
\[\begin{cases}
    \lambda_{k+1} & = \lambda_k + \tau_{\lambda}(a\cdot\gamma_k - \varepsilon\lambda_{k+1}), \\
    \Tilde{\lambda}_{k+1} & = \lambda_{k+1} + \omega(\lambda_{k+1}- \lambda_k), \\
     \gamma_{k+1} & = \gamma_k -\tau_{\gamma}(-\ddot{\gamma}_{k+1}+ \Tilde{\lambda}_{k+1}a).
\end{cases}\]
Setting $\omega = 1$, the update for $\gamma$ simplifies to 
\[(1-\tau_{\gamma}\partial^2)\gamma_{k+1} = \gamma_k - \tau_{\gamma}(2\lambda_{k+1}-\lambda_k)a.\]
Solving the 2nd order equation with zero boundary conditions yields
\begin{align*}
\gamma_{k+1} &= (1-\tau_{\gamma}\partial^2)^{-1}(\gamma_k - \tau_{\gamma}(2\lambda_{k+1}-\lambda_k)a)\\
& =\int_0^1 G(t, s)\left( \gamma_k(s) - \tau_{\gamma}(2\lambda_{k+1}(s)-\lambda_k(s) )a\right)ds,
\end{align*}
where $G$ is the Green function defined by 
\[
G(t,s) = \frac{\sqrt{\tau_\gamma}}{\sinh (1/\sqrt{\tau_\gamma})} 
\begin{cases}
\sinh((1 - s)/\sqrt{\tau_\gamma}) \sinh(t/\sqrt{\tau_\gamma}), & 0 \leq t \leq s, \\
\sinh(s/\sqrt{\tau_\gamma}) \sinh((1 - t)/\sqrt{\tau_\gamma}), & s < t \leq 1.
\end{cases}
\]
We can prove the following result.
\begin{theorem}
    Assume $\phi(\gamma) = a\cdot \gamma$ for some $a \in \R^3$, and let $\lambda_k$ and $\gamma_k$ be the iterates generated by the  scheme
    \[\begin{cases}
        \lambda_{k+1} & = \lambda_k + \tau_{\lambda}(a\cdot\gamma_k - \varepsilon\lambda_{k+1})\\
        \gamma_{k+1} &= (1-\tau_{\gamma}\partial^2)^{-1}(\gamma_k - \tau_{\gamma}(2\lambda_{k+1}-\lambda_k)a).
    \end{cases}\]
    Denote the matrix and variables 
    $$
    A = \begin{bmatrix}
    \frac{1}{\tau_{\lambda}} & a^T\\
    a & \frac{1}{\tau_{\gamma}}I_{3\times 3}
\end{bmatrix}, \quad \xi = \begin{bmatrix}
    \lambda\\ \gamma
\end{bmatrix}, \;  \bar{\xi}_k = \begin{bmatrix}
    \bar{\lambda}_k\\ \bar{\gamma}_k
\end{bmatrix} = \frac{1}{k}\sum_{i=1}^k \xi_i.
$$ 
Then the system converges for any step sizes $\tau_{\lambda}, \tau_{\gamma}$ satisfying  
\[
\tau_{\lambda}\tau_{\gamma} < \frac{1}{|a|^2} = \frac{1}{a^Ta},
\]
with an ergodic convergence rate of $\mathcal{O}(1/k)$. Specifically, for any $k\geq 1$ and any $(\gamma, \lambda)$, 
\[\mathcal{L}_{\varepsilon}(\bar{\gamma}_k, \lambda) - \mathcal{L}_{\varepsilon}(\gamma, \bar{\lambda}_k) \leq \frac{1}{2k}\|\xi_0 - \xi\|^2_A. \]
\end{theorem}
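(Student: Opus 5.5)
This is the standard Chambolle--Pock ergodic argument, adapted to the functional setting. Write $\mathcal{L}_\varepsilon(\gamma,\lambda)=g(\gamma)+\langle a\cdot\gamma,\lambda\rangle-f^*(\lambda)$, where $g(\gamma)=\frac12\int_0^1|\dot\gamma|^2dt$ on curves with fixed endpoints and $f^*(\lambda)=\frac{\varepsilon}{2}\int_0^1\lambda^2dt$. Both are convex, and the coupling is linear with operator $K\gamma=a\cdot\gamma$ and $K^*\lambda=\lambda a$, so $\|K\|=|a|$.

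\textbf{Step 1: the updates as proximal steps.} The $\lambda$-update $\lambda_{k+1}=\lambda_k+\tau_\lambda(a\cdot\gamma_k-\varepsilon\lambda_{k+1})$ is exactly $\lambda_{k+1}=\mathrm{prox}_{\tau_\lambda f^*}(\lambda_k+\tau_\lambda K\gamma_k)$. The $\gamma$-update $(1-\tau_\gamma\partial^2)\gamma_{k+1}=\gamma_k-\tau_\gamma K^*(2\lambda_{k+1}-\lambda_k)$ is the optimality condition of $\gamma_{k+1}=\mathrm{prox}_{\tau_\gamma g}(\gamma_k-\tau_\gamma K^*\tilde\lambda_{k+1})$, since $\partial g=-\partial^2$ under the zero boundary conditions on the perturbation. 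We can equally use the Green function representation stated just before the theorem. Order matters here: the dual variable is updated first and the extrapolation is placed on $\lambda$.

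\textbf{Step 2: one-step inequality from strong convexity of the prox subproblems.} For arbitrary $(\gamma,\lambda)$, the $\lambda$-step gives
\[
f^*(\lambda)\ge f^*(\lambda_{k+1})+\Big\langle \tfrac{\lambda_k-\lambda_{k+1}}{\tau_\lambda}+K\gamma_k,\ \lambda-\lambda_{k+1}\Big\rangle,
\]
and the $\gamma$-step gives
\[
g(\gamma)\ge g(\gamma_{k+1})+\Big\langle \tfrac{\gamma_k-\gamma_{k+1}}{\tau_\gamma}-K^*\tilde\lambda_{k+1},\ \gamma-\gamma_{k+1}\Big\rangle .
\]
We add these and apply the polarization identity $2\langle u-v,w-v\rangle=|u-v|^2+|w-v|^2-|u-w|^2$ to the difference quotients. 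This yields
\[
\mathcal{L}_\varepsilon(\gamma_{k+1},\lambda)-\mathcal{L}_\varepsilon(\gamma,\lambda_{k+1})\le \tfrac12\|\xi_k-\xi\|_{D}^2-\tfrac12\|\xi_{k+1}-\xi\|_D^2-\tfrac12\|\xi_{k+1}-\xi_k\|_D^2+R_k,
\]
where $D=\mathrm{diag}(1/\tau_\lambda,\,I/\tau_\gamma)$ and $R_k$ collects the bilinear coupling terms involving $K$, $\lambda_{k+1}-\lambda_k$, $\gamma_{k+1}-\gamma_k$ and $\gamma_{k+1}-\gamma$.

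\textbf{Step 3: absorb the cross terms into the $A$-norm.} This is the main obstacle. The bilinear terms in $R_k$ must be rearranged so that the off-diagonal block of $A$ appears, giving a telescoping expression of the form
\[
\tfrac12\|\xi_k-\xi\|_A^2-\tfrac12\|\xi_{k+1}-\xi\|_A^2-\tfrac12\|\xi_{k+1}-\xi_k\|_A^2
\]
possibly up to a sign convention on the off-diagonal block. The extrapolation $\tilde\lambda_{k+1}=2\lambda_{k+1}-\lambda_k$ is precisely what makes the leftover term $\langle K(\gamma_{k+1}-\gamma_k),\lambda_{k+1}-\lambda_k\rangle$ assemble into the $A$-norm rather than remain as an error. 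I would first try expanding $\|\cdot\|_A^2$ explicitly and matching coefficients term by term. Positive definiteness of $A$ follows from the Schur complement condition $\frac1{\tau_\lambda}-\tau_\gamma|a|^2>0$, i.e. $\tau_\lambda\tau_\gamma|a|^2<1$. This guarantees that $\|\cdot\|_A$ is a genuine norm, that the increment term is nonnegative and can be dropped, and that the iterates stay bounded.

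\textbf{Step 4: sum and average.} Summing the one-step inequality over $i=0,\dots,k-1$ telescopes to
\[
\sum_{i=1}^k\big[\mathcal{L}_\varepsilon(\gamma_i,\lambda)-\mathcal{L}_\varepsilon(\gamma,\lambda_i)\big]\le\tfrac12\|\xi_0-\xi\|_A^2 .
\]
Since $\mathcal{L}_\varepsilon$ is convex in $\gamma$ and concave in $\lambda$, Jensen's inequality applied to the averages $\bar\gamma_k,\bar\lambda_k$ gives the stated bound $\frac1{2k}\|\xi_0-\xi\|_A^2$. Convergence of the iterates themselves can then be obtained from the Fejér monotonicity in the $A$-norm.
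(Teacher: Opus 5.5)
Your proposal is correct and is essentially the paper's argument. The paper expands $\mathcal{L}_\varepsilon(\gamma_{k+1},\lambda)-\mathcal{L}_\varepsilon(\gamma,\lambda_{k+1})$ exactly, since everything is quadratic, rather than using subgradient inequalities of the prox steps. It then substitutes the update relations to obtain $\int_0^1 A(\xi_k-\xi_{k+1})\cdot(\xi_{k+1}-\xi)\,dt$, applies the three-point identity, telescopes, and finishes with Jensen.

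The step you left open in Step 3 closes with no sign ambiguity. The coupling terms sum to exactly $\langle K(\gamma_k-\gamma_{k+1}),\lambda_{k+1}-\lambda\rangle+\langle K(\gamma_{k+1}-\gamma),\lambda_k-\lambda_{k+1}\rangle$. This is the off-diagonal part of that inner product with the $+a$ blocks of the paper's $A$; the sign is $+$, unlike the textbook primal-first Chambolle--Pock matrix, because here $\lambda$ is updated first and then extrapolated.
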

\begin{proof}
From the update scheme, we have 
\[\begin{cases}
    \frac{\lambda_k -\lambda_{k+1}}{\tau_{\lambda}}+ a\cdot(\gamma_k - \gamma_{k+1}) &= \varepsilon\lambda_{k+1}-a\cdot \gamma_{k+1} = -\frac{\delta}{\delta\lambda}\mathcal{L}_{\varepsilon}^{k+1}\\
    \frac{\gamma_k - \gamma_{k+1}}{\tau_{\gamma}} + (\lambda_k-\lambda_{k+1}) a & = -\ddot{\gamma}_{k+1}+\lambda_{k+1}a = \frac{\delta}{\delta\gamma}\mathcal{L}_{\varepsilon}^{k+1}.
\end{cases}\]
Now consider the difference in Lagrangian values:
\begin{align*}
    \mathcal{L}_{\varepsilon}(\gamma_{k+1},\lambda) - \mathcal{L}_{\varepsilon}(\gamma,\lambda_{k+1}) & = \Delta_1 + \Delta_2 + \Delta_3.
\end{align*}
Term-by-term analysis gives:
\begin{itemize}
    \item Kinetic term difference:
    \begin{align*}
    \Delta_1 & = \frac{1}{2}\int_0^1\left(|\dot{\gamma}_{k+1}|^2 -|\dot{\gamma}|^2\right)~dt\\
    & = \int_0^1 (\gamma-\gamma_{k+1})\cdot\ddot{\gamma}_{k+1}~dt - \frac{1}{2}\int_0^1|\dot{\gamma}_{k+1}-\dot{\gamma}|^2~dt,
\end{align*}
\item By-linear term difference: 
\begin{align*}
    \Delta_2 & = \int_0^1 \lambda a\cdot \gamma_{k+1}~dt -\int_0^1 \lambda_{k+1}a\cdot \gamma~dt\\
     & = \int_0^1 (\lambda - \lambda_{k+1})a\cdot \gamma_{k+1}~dt + \int_0^1\lambda_{k+1}a\cdot (\gamma_{k+1}-\gamma)~dt,
\end{align*}
\item Regularization term difference: 
\begin{align*}
    \Delta_3 & = -\frac{\varepsilon}{2}\int_0^1 \lambda^2~dt + \frac{\varepsilon}{2}\int_0^1 \lambda_{k+1}^2~dt\\
    & = \int_0^1(\lambda_{k+1} - \lambda)\varepsilon\lambda_{k+1}~dt - \frac{\varepsilon}{2}\int_0^1 (\lambda- \lambda_{k+1})^2~dt.
\end{align*}
\end{itemize}
Combining all:
\begin{align*}
    & \mathcal{L}_{\varepsilon}(\gamma_{k+1},\lambda) - \mathcal{L}_{\varepsilon}(\gamma,\lambda_{k+1}) \\
    \qquad  = &\int_0^1 (\gamma - \gamma_{k+1})\cdot (\ddot{\gamma}_{k+1} - \lambda_{k+1}a)~dt +\int_0^1 (\lambda-\lambda_{k+1})(a\cdot \gamma_{k+1} - \varepsilon\lambda_{k+1})~dt \\
      \qquad  &- \frac{1}{2}\int_0^1 |\dot{\gamma} - \dot{\gamma}_{k+1}|^2~dt - \frac{\varepsilon}{2}\int_0^1 |\lambda - \lambda_{k+1}|^2~dt.
\end{align*}
Using the update expressions, 
\begin{align*}
     \leq & \int_0^1 (\gamma_{k+1}-\gamma)\cdot \left[\frac{\gamma_k -\gamma_{k+1}}{\tau_{\gamma}} + a(\lambda_k -\lambda_{k+1})\right]~dt \\
    &+ \int_0^1(\lambda_{k+1}- \lambda)\left[\frac{\lambda_k - \lambda_{k+1}}{\tau_{\lambda}}+ a\cdot(\gamma_k - \gamma_{k+1})\right]~dt.
  \end{align*}   
  Let $A = \begin{bmatrix}
    \frac{1}{\tau_{\lambda}} & a^T\\
    a & \frac{1}{\tau_{\gamma}}I_{3\times 3}
\end{bmatrix}$, and define $\xi = \begin{bmatrix}
    \lambda\\ \gamma
\end{bmatrix}$, then we obtain 
  \begin{align*} 
     =& \int_0^1 A(\xi_k - \xi_{k+1})\cdot (\xi_{k+1}-\xi)~dt \\
     = & \frac{1}{2}\int_0^1 A(\xi_k - \xi)\cdot (\xi_k - \xi)~dt - \frac{1}{2}\int_0^1A(\xi_{k+1} - \xi)\cdot (\xi_{k+1}- \xi)~dt\\ &- \frac{1}{2}\int_0^1 A(\xi_k - \xi_{k+1})\cdot (\xi_k- \xi_{k+1})~dt\\
     \leq &\frac{1}{2}\int_0^1 \|\xi_k - \xi\|_A^2~dt - \frac{1}{2}\int_0^1 \|\xi_{k+1}- \xi\|_A^2~dt.
\end{align*}
Hence, the algorithm converges if
\[\tau_{\lambda}\tau_{\gamma} < \frac{1}{|a|^2} = \frac{1}{a^Ta}.\] 
Further, define ergodic averages,    $\bar{\lambda}_k = \frac{1}{k}\sum_{i=1}^k \lambda_i$ and 
$\bar{\gamma}_k = \frac{1}{k}\sum_{i=1}^k \gamma_i$. 
By Jensen's inequality: 
\[\int_0^1 |\dot{\bar{\gamma}}_k|~dt  \leq \frac{1}{k}\sum_{i=1}^k\int_0^1 |\dot{\gamma_i}|~dt.\]
Using the telescoping sum, we conclude  
\begin{align*}
    \mathcal{L}_{\varepsilon}(\bar{\gamma}_k, \lambda) - \mathcal{L}_{\varepsilon}(\gamma, \bar{\lambda}_k) 
    & \leq \frac{1}{k}\sum_{i=0}^{k-1} (\mathcal{L}(\gamma_{i+1}, \lambda) -  \mathcal{L}(\gamma,\lambda_{i+1}))\\
     & \leq \frac{1}{2k}\|\xi_0 -  \xi\|^2_A. 
\end{align*}
Thus, the method achieves ergodic convergence at rate $O(1/k)$.
\end{proof}

\section{Discussion and future directions}  

In this work, we introduce a primal-dual level set method for computing geodesic distances. The underlying surface is represented as the zero level set of
a function, transforming the problem into a constrained minimization framework. Using  the primal-dual methodology, we incorporate  regularization and acceleration techniques to develop an efficient and robust algorithm. 

We analyze the long-term behavior of the high-resolution PDE system assicaited with the method by employing a Lyapunov function. Combined with structural conditions imposed on the level set function, this analysis establishes exponential convergence rates. 
Furthermore, relating  these results to discrete-time settings, we gain further  insights into the algorithm's performance. This proposed approach is computationally efficient, straightforward to implement, and offers several alternative extensions. Numerical examples illustrate that the method converges to geodesics as the level of refinement increases. 

Despite its strengths, our study has limitations that point to future research  directions. The convergence analysis relies on specific assumptions about the level set function. Relaxing this assumption to develop a more general framework would be a valuable extension of our work. Additionally, our current work is limited to static surfaces, whereas many practical 
applications require handling dynamically 
moving surfaces. Exploring the adaptability of our framework to moving surfaces  will be an important avenue for future work.

We can generalize the current framework to handle a general Lagrangian $L(\gamma, \dot\gamma)$ by solving the problem:  
\[
\min_{\Gamma(p, q, \Omega)} \int_0^1 L(\gamma, \dot\gamma)dt
\]
provided that $L$ is smooth and convex in its second argument. Using the saddle-point formulation 
\[
\inf_{\gamma}\sup_{\lambda} \left\{ \int_0^1 L(\gamma(t), \dot{\gamma}(t))~dt  + \int_0^1 \lambda(t)\phi(\gamma(t))~dt\right\}, 
\]
we derive with the same regularization the following iterative updates:   
\begin{equation*}\begin{cases}
    \lambda_{k+1}(t) = \frac{1}{1+\varepsilon\tau_{\lambda}}(\lambda_k(t)+\tau_{\lambda}\phi(\gamma_{k}(t))), \\
    \Tilde{\lambda}_{k+1}(t) = \lambda_{k+1}(t)+ \omega(\lambda_{k+1}(t)-\lambda_k(t)),\\
    \gamma_{k+1}(t) = \gamma_k(t) - \tau_{\gamma}(L_{\gamma_k} -\frac{d}{dt} L_{\dot{\gamma}_k} +\Tilde{\lambda}_{k+1}(t)\nabla\phi(\gamma_k(t))),
\end{cases}\end{equation*}
where $\varepsilon>0$ is a regularization parameter,  $\omega \in [0, 1]$ controls relaxation, and $\tau_\lambda, \tau_\gamma$ are  step sizes.


\bibliographystyle{siam} 
\bibliography{GeodesicApproximation}

\end{document}